\theoremstyle{plain}
\newtheorem{thm}{Theorem}[section]
\newtheorem{lem}[thm]{Lemma}
\newtheorem{prop}[thm]{Proposition}
\theoremstyle{definition}
\newtheorem{defn}[thm]{Definition}
\theoremstyle{remark}
\newtheorem*{remark*}{Remark}
\numberwithin{equation}{section}
\newcommand{\cK}{{\mathcal{K}}}
\newcommand{\cP}{{\mathcal{P}}}
        \newcommand{\field}[1]{{\mathbb{#1}}}
        \newcommand{\NN}{\field{N}}
        \newcommand{\ZZ}{\field{Z}}
        \newcommand{\RR}{\field{R}}
        \newcommand{\CC}{\field{C}}
\begin{document}

\title[Berezin-Toeplitz quantization]{Berezin-Toeplitz quantization asssociated with higher Landau levels of the Bochner Laplacian}

\author[Y. A. Kordyukov]{Yuri A. Kordyukov}
\address{Institute of Mathematics, Ufa Federal Research Centre, Russian Academy of Sciences, 112~Chernyshevsky str., 450008 Ufa, Russia and Kazan Federal University, 18 Kremlyovskaya str., 420008 Kazan,  Russia} \email{yurikor@matem.anrb.ru}

\thanks{Partially supported by the development program of the Regional Scientific and Educational Mathematical Center of the Volga Federal District, agreement N 075-02-2020-1478.}

\subjclass[2000]{Primary 58J37; Secondary 53D50}

\keywords{symplectic manifold, Bochner Laplacian, higher Landau levels, asymptotics, Toeplitz operators, quantization}

\begin{abstract}
In  this paper, we construct a family of Berezin-Toeplitz type quantizations of a compact symplectic manifold. For this, we choose a Riemannian metric on the manifold such that the associated Bochner Laplacian has the same local model at each point (this is slightly more general than in almost-K\"ahler quantization). Then the spectrum of the Bochner Laplacian on high tensor powers $L^p$ of the prequantum line bundle $L$ asymptotically splits into clusters of size ${\mathcal O}(p^{3/4})$ around the points $p\Lambda$, where $\Lambda$ is an eigenvalue of the model operator (which can be naturally called a Landau level). We develop the Toeplitz operator calculus with the quantum space, which is the eigenspace of the Bochner Laplacian corresponding to the eigebvalues frrom the cluster. We show that it provides a Berezin-Toeplitz quantization. If the cluster corresponds to a Landau level of multiplicity one, we obtain an algebra of Toeplitz operators and a formal star-product. For the lowest Landau level, it recovers the almost K\"ahler quantization.
\end{abstract}

\date{December 28, 2020}

 \maketitle

\section{Introduction}
The main goal of our paper is to construct a family of Berezin-Toeplitz quantizations based on appropriate eigenspaces of  the Bochner Laplacian under certain condition on the Riemannian metric on the symplectic manifold (which is slightly more general than the almost-K\"ahler one).  More precisely, let $(X,\mathbf B)$ be a closed symplectic manifold of dimension $2n$. Assume that there exists a Hermitian line bundle $(L,h^L)$ on $X$ with a Hermitian connection $\nabla^L$ such that 
\begin{equation}\label{e:def-omega}
\mathbf B=iR^L, 
\end{equation} 
where $R^L$ is the curvature of the connection $\nabla^L $ defined as $R^L=(\nabla^L)^2$. 

Let $g$ be a Riemannian metric on $X$ and $(E,h^E)$ be a Hermitian vector bundle of rank $r$ on $X$ with a Hermitian connection $\nabla^E$. For any $p\in \NN$, let $L^p:=L^{\otimes p}$ be the $p$th tensor power of $L$ and let
\[
\nabla^{L^p\otimes E}: {C}^\infty(X,L^p\otimes E)\to
{C}^\infty(X, T^*X \otimes L^p\otimes E)
\] 
be the Hermitian connection on $L^p\otimes E$ induced by $\nabla^{L}$ and $\nabla^E$. Consider the induced Bochner Laplacian $\Delta^{L^p\otimes E}$ acting on $C^\infty(X,L^p\otimes E)$ by
\begin{equation}\label{e:def-Bochner}
\Delta^{L^p\otimes E}=\big(\nabla^{L^p\otimes E}\big)^{\!*}\,
\nabla^{L^p\otimes E},
\end{equation} 
where $\big(\nabla^{L^p\otimes E}\big)^{\!*}: {C}^\infty(X,T^*X\otimes L^p\otimes E)\to
{C}^\infty(X,L^p\otimes E)$ is the formal adjoint of  $\nabla^{L^p\otimes E}$. 

For an arbitrary $x\in X$, one can introduce a second order differential operator acting on $C^\infty(T_{x}X, E_{x})$ (the model operator), which is obtained from the Bochner Laplacian $\Delta^{L^p\otimes E}$ by freezing coefficients at $x$ (see \eqref{e:DeltaL0p} below and \cite{Kor20} for more details). It is the Bochner Laplacian on a constant curvature Hermitian line bundle over the Euclidean space $T_{x}X$. It can be also considered as the magnetic Laplacian with constant magnetic field. We consider the skew-adjoint operator $B_x : T_xX\to T_xX$ such that 
\[
\mathbf B_x(u,v)=g(B_xu,v), \quad u,v\in T_xX. 
\]
Its eigenvalues have the form $\pm i a_j(x), j=1,\ldots,n,$ with $a_j(x)>0$. 
The spectrum of the model operator consists of eigenvalues of the form
$\sum_{j=1}^n(2k_j+1)a_j(x)$ with $(k_1,\cdots,k_n)\in\ZZ_+^n$. Each eigenvalue has
 infinite multiplicity and can be called a Landau level.

We assume that the functions $a_j$ can be chosen to be constants:
\begin{equation}\label{e:aj-constant}
a_j(x)\equiv a_j, \quad x\in X, \quad  j=1,\ldots,n.
\end{equation}
This is a condition on the Riemannian metric $g$, which can be satisfied for any symplectic manifold $X$. In this case, that the spectrum of the model operator is independent of $x$ and coincides with  
the countable discrete set 
\begin{equation}\label{e:def-Sigmax}
\Sigma:=\left\{\Lambda_{\mathbf k}:=\sum_{j=1}^n(2k_j+1) a_j\,:\, \mathbf k=(k_1,\cdots,k_n)\in\ZZ_+^n\right\}.
\end{equation}
If $J=\frac{1}{2\pi}B$ is an almost-complex structure (the almost K\"ahler case), then $a_j=2\pi,  j=1,\ldots,n$ and
\begin{equation}\label{e:Kaehler}
\Sigma=\left\{2\pi (2k+n)\,:\, k\in\ZZ_+\right\}.
\end{equation}
As shown in \cite{Kor20} (see also \cite{FT}), for any $K>0$, there exists $c>0$ such that for any $p\in \NN$ the spectrum of $\Delta^{L^p\otimes E}$ in the interval  $[0,K]$  is  contained in the $cp^{3/4}$-neighborhood of $p\Sigma$. In other words, the spectrum of $\Delta^{L^p\otimes E}$ asymptotically splits into clusters  around $p\Sigma$ of size ${\mathcal O}(p^{3/4})$.

Next, we fix one of these clusters associated with $\Lambda\in \Sigma$ and develop the Toeplitz operator calculus associated with the eigenspace of the Bochner Laplacian corresponding to eigenvalues from this cluster. Consider an interval $I=(\alpha,\beta)$ such that $(\alpha,\beta)\cap \Sigma=\{\Lambda\}$. By the above mentioned fact, there exist $\mu_0>0$ and $p_0\in \NN$ such that for any $p>p_0$ 
\[
\sigma(\Delta^{L^p\otimes E})\subset (-\infty, p(\Lambda -\mu_0)) \cup (p\alpha,p\beta) \cup (p(\Lambda+\mu_0), \infty).
\] 
The spectral projection of the operator $\Delta^{L^p\otimes E}$ associated with $(p\alpha,p\beta)$ is independent of the choice of $I$ and will be denoted by $P_{p,\Lambda}$. 

For $f\in C^\infty(X,\operatorname{End}(E))$, we define the  associated Toeplitz operator to be the sequence of bounded linear operators
\[
T_{f,p}=P_{p,\Lambda}fP_{p,\Lambda}: L^2(X,L^p\otimes E)\to L^2(X,L^p\otimes E), \quad p\in \NN.
\] 

\begin{thm}\label{t:comm}
Let $f,g\in C^\infty(X,\operatorname{End}(E))$. Then, for the product of the Toeplitz operators $\{T_{f,p}\}$ and $\{T_{g,p}\}$, we have
\begin{equation}\label{e:TfpTgp-prod}
T_{f,p}T_{g,p}=T_{fg,p}+\mathcal O(p^{-1}). 
\end{equation}
Moreover, if $f,g\in C^\infty(X)$, then, for the commutator of the operators $\{T_{f,p}\}$ and $\{T_{g,p}\}$, we have
\begin{equation}\label{e:TfpTgp-comm}
[T_{f,p}, T_{g,p}]=i p^{-1}T_{\{f,g\},p}+ \mathcal O(p^{-1/2}), 
\end{equation}
where $\{f,g\}$ is the Poisson bracket on the symplectic manifold $(X,\mathbf B)$.
\end{thm}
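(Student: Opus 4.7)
The plan is to follow the Ma--Marinescu kernel framework for Berezin--Toeplitz quantization, adapted to the higher Landau level spectral projection $P_{p,\Lambda}$. The central object is a near-diagonal asymptotic expansion of its Schwartz kernel $P_{p,\Lambda}(x,y)$, which plays the role of the Bergman kernel expansion in the K\"ahler case. Using the spectral gap around $p\Lambda$ established in \cite{Kor20} together with a Helffer--Sj\"ostrand functional calculus (or finite-propagation-speed argument) applied to a smooth spectral cutoff of $p^{-1}\Delta^{L^p\otimes E}$, one first obtains $O(p^{-\infty})$ decay of $P_{p,\Lambda}(x,y)$ outside any fixed neighborhood of the diagonal, with sharp concentration at scale $p^{-1/2}$. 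Then, in normal coordinates and a synchronous trivialization around $x_0\in X$, rescaling $Z=\sqrt{p}(x-x_0)$ yields an expansion
\[
p^{-n} P_{p,\Lambda}\!\bigl(x_0+Z/\sqrt{p},\, x_0+Z'/\sqrt{p}\bigr) \sim \sum_{r\geq 0} p^{-r/2} F_r(Z,Z'),
\]
whose leading term $F_0$ is the Schwartz kernel of the spectral projection of the model operator at $x_0$ onto its $\Lambda$-eigenspace (uniform in $x_0$ by \eqref{e:aj-constant}), and whose higher terms $F_r$ are polynomial-times-Gaussian kernels encoding curvature corrections.

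For the product formula \eqref{e:TfpTgp-prod}, the Schwartz kernel of $T_{f,p}T_{g,p}$ is a double integral of three copies of $P_{p,\Lambda}$ against $f$ and $g$. Localizing the integration to a $p^{-1/2+\epsilon}$-neighborhood of the diagonal (justified by the decay of the kernel), rescaling to model coordinates, and Taylor expanding $f$ and $g$ around a base point, the computation reduces to the model. At leading order, the reproducing property $F_0 \circ F_0 = F_0$ of the model projection yields the kernel of $P_{p,\Lambda}(fg)P_{p,\Lambda} = T_{fg,p}$, and the next-order contributions involve first derivatives of $f,g$ and produce only $O(p^{-1})$ remainders, which establishes \eqref{e:TfpTgp-prod}.

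For the commutator \eqref{e:TfpTgp-comm}, one expands the composition one order further. For scalar $f,g\in C^\infty(X)$, the $T_{fg,p}$ and the symmetric part of the subleading bidifferential operator cancel in the commutator, and the leading surviving contribution comes from the antisymmetric part of the subleading bidifferential operator in the product expansion. The main obstacle is identifying this antisymmetric part with $i\{f,g\}/p$ in the higher Landau level setting: in the lowest Landau (almost-K\"ahler) case this follows directly from the Bargmann--Fock holomorphic structure of $F_0$, whereas for higher levels $F_0$ is substantially more intricate and one must carefully track the action of creation/annihilation operators on the $\Lambda$-eigenspace decomposition of the model Hilbert space. The structural reason that the Poisson bracket nonetheless appears is universal: it arises from the magnetic commutator $[\pi_j, \pi_k] = i\mathbf{B}_{jk}(x_0)$ between the covariant momentum operators of the model Hamiltonian, which produces the inverse symplectic form independently of which Landau eigenspace one projects onto. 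Controlling the remaining error terms then amounts to bounding the $F_r$ contributions and the Taylor remainders of $f,g$, with the stated order reflecting the $p^{3/4}$ cluster width relative to the Landau spacing.
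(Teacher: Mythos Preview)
Your overall strategy coincides with the paper's: use the full off-diagonal expansion of $P_{p,\Lambda}$ from \cite{Kor20}, compute the induced expansions of $T_{f,p}$ and of compositions, and match coefficients order by order. Two steps, however, are genuine gaps rather than routine details.

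First, for \eqref{e:TfpTgp-prod} with $\operatorname{End}(E)$-valued $f,g$, it is not enough to say that the next-order terms ``produce only $O(p^{-1})$ remainders.'' One must show that the $p^{-1/2}$ coefficients of $T_{f,p}T_{g,p}$ and of $T_{fg,p}$ agree, and the coefficient $K_{1,x_0}(f)$ contains a term $f(x_0)F_{1,x_0}$ in addition to the first-derivative piece. To obtain $K_{1,x_0}(f,g)=K_{1,x_0}(fg)$ one has to commute $F_{1,x_0}$ past $g(x_0)\in\operatorname{End}(E_{x_0})$, and this only works because $F_{1,x_0}(Z,Z')$ happens to be a \emph{scalar} operator in $E_{x_0}$ --- a nontrivial input extracted from the explicit formula for $F_1$ in \cite{Kor20}, which you do not mention. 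Without it the argument breaks for non-scalar symbols.

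Second, the model commutator identity is the heart of \eqref{e:TfpTgp-comm} and your heuristic does not prove it. After reducing to linear $F,G$ on $\mathbb R^{2n}$ one needs
\[
[\mathcal P_\Lambda F\mathcal P_\Lambda,\,\mathcal P_\Lambda G\mathcal P_\Lambda]=\{F,G\}_a\,\mathcal P_\Lambda,
\]
and the magnetic CCR $[\pi_j,\pi_k]=i\mathbf B_{jk}$ alone does not deliver this: the threefold projection can in principle produce operator-valued corrections, and when $|\mathcal K_\Lambda|>1$ there are cross-terms $\mathcal P_{\Lambda_{\mathbf k_1}}F\mathcal P_{\Lambda_{\mathbf k}}G\mathcal P_{\Lambda_{\mathbf k_2}}$ with $\mathbf k_1\neq\mathbf k_2$ to control. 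The paper computes $[z_j,\mathcal P_{\Lambda_{\mathbf k}}]$ explicitly from $\mathcal P_{\Lambda_{\mathbf k}}=\tfrac{1}{2^{|\mathbf k|}a^{\mathbf k}\mathbf k!}\,b^{\mathbf k}\mathcal P(b^+)^{\mathbf k}$ and the relations \eqref{e:com1}, \eqref{com-bg}, obtaining shifts $\mathbf k\mapsto\mathbf k\pm e_j$; the cross-terms then vanish because $\mathbf k\pm e_j\notin\mathcal K_\Lambda$ whenever $\mathbf k\in\mathcal K_\Lambda$, and the surviving diagonal pieces collapse to $\{F,G\}_a\mathcal P_{\Lambda_{\mathbf k}}$ via $[b_j^+,b_j]=2a_j$. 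This combinatorial cancellation is precisely what your sketch omits.

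Finally, your remark that the remainder order reflects the $p^{3/4}$ cluster width is incorrect: the cluster width enters only to make $P_{p,\Lambda}$ well-defined for large $p$, while the error order in \eqref{e:TfpTgp-comm} is dictated purely by the $p^{-1/2}$ grading of the kernel expansion.
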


Thus, the Toeplitz operators provide a Berezin-Toeplitz quantization for the compact symplectic manifold $(X,\mathbf B)$. The limit $p\to +\infty$ for Toeplitz operators can be thought of as a semiclassical limit, with semiclassical parameter $\hbar=\frac{1}{p}\to 0$. Theorem~\ref{t:algebra} shows that this quantization has a correct semiclassical limit. 

In the case when the set $\mathcal K_\Lambda:=\{\mathbf k\in \ZZ^n_+ : \Lambda_{\mathbf k}=\Lambda\}$ consists of a single element, we construct the algebra of Toeplitz operators associated with $\Lambda$. 

\begin{defn}\label{d:Toeplitz0}
A Toeplitz operator is a sequence $\{T_p\}=\{T_p\}_{p\in \mathbb N}$ of bounded linear operators $T_p : L^2(X,L^p\otimes E)\to L^2(X,L^p\otimes E)$, satisfying the following conditions.
\begin{description}
\item[(i)] For any $p\in \mathbb N$, we have 
\[
T_p=P_{p,\Lambda}T_pP_{p,\Lambda}. 
\]
\item[(ii)] There exists a sequence $g_l\in C^\infty(X,\operatorname{End}(E))$ such that 
\[
T_p=P_{p,\Lambda}\left(\sum_{l=0}^\infty p^{-l}g_l\right)P_{p,\Lambda}+\mathcal O(p^{-\infty}),
\]
i.e. for any natural $k$ there exists $C_k>0$ such that 
\[
\left\|T_p-P_{p,\Lambda}\left(\sum_{l=0}^k p^{-l}g_l\right)P_{p,\Lambda}\right\|\leq C_kp^{-k-1}.
\]
\end{description}
  \end{defn}
  
\begin{thm}\label{t:algebra}
Assume that $\mathcal K_\Lambda$ consists of a single element. Then, for any $f,g\in C^\infty(X,\operatorname{End}(E))$,  the product of the Toeplitz operators $\{T_{f,p}\}$ and $\{T_{g,p}\}$ is a Toeplitz operator in the sense of Definition \ref{d:Toeplitz0}. More precisely, it admits the asymptotic expansion 
\begin{equation}\label{e:TfTg-exp}
T_{f,p}T_{g,p}=\sum_{r=0}^\infty p^{-r}T_{C_r(f,g),p}+\mathcal O(p^{-\infty}), 
\end{equation}
with some $C_r(f,g)\in C^\infty(X,\operatorname{End}(E))$, where the $C_r$ are bidifferential operators. In particular, $C_0(f,g)=fg$ and, for $f,g\in C^\infty(X)$, we have
\begin{equation}\label{e:C1fg-gf}
C_1(f,g)-C_1(f,g)=i\{f,g\}.
\end{equation}
\end{thm}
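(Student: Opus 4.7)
The plan is to follow the template of Berezin--Toeplitz calculus (in the spirit of Ma--Marinescu and Bordemann--Meinrenken--Schlichenmaier) adapted to the cluster $P_{p,\Lambda}$. The starting point is the near-diagonal asymptotic expansion of the Schwartz kernel of $P_{p,\Lambda}$ (established earlier in the paper, building on \cite{Kor20}). In normal coordinates $Z$ centered at $x_0 \in X$ and after rescaling by $1/\sqrt{p}$, one has, for $Z,Z'$ in a shrinking neighborhood of the origin,
\[
P_{p,\Lambda}(Z, Z') \sim p^n \sum_{r\ge 0} p^{-r/2} F_r(x_0;Z, Z'),
\]
where $F_0$ is the Schwartz kernel of the model-operator spectral projection onto the Landau level $\Lambda$. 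Under the assumption $\mathcal K_\Lambda = \{\mathbf k_\Lambda\}$, this $F_0$ is the projection onto a \emph{single} generalized Bargmann--Hermite sector and therefore factors as a fixed polynomial in $Z,\bar Z'$ times a Gaussian; this rigidity is what makes the whole calculus close up.

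Starting from this, I would first derive an analogous expansion for the kernel of $T_{f,p} = P_{p,\Lambda} f P_{p,\Lambda}$. Inserting the Taylor expansion of $f \circ \exp_{x_0}$ and integrating termwise against $F_0$ and its corrections via Gaussian (Laplace-type) asymptotics produces an expansion $T_{f,p}(Z,Z') \sim p^n \sum_{r\ge 0} p^{-r/2}\, Q_r(f)(x_0;Z,Z')$, with $Q_r(f)$ polynomial in $Z,Z'$ times $F_0$. Composing two such expansions gives an analogous expansion for the kernel of $T_{f,p}T_{g,p}$, via a Laplace computation on $\int T_{f,p}(Z,Z'')T_{g,p}(Z'',Z')\,dZ''$. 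The key observation is then that under the singleton hypothesis, any such polynomial-times-Gaussian kernel lying in $\operatorname{Image} P_{p,\Lambda} \otimes \operatorname{Image} P_{p,\Lambda}^*$ can be rewritten as $F_0 \circ M_{h_r} \circ F_0$ for some local $\operatorname{End}(E_{x_0})$-valued polynomial $h_r$ (this is the standard Hermite-calculus inversion for Bargmann-type projections). Patching these local symbols into global sections $g_l = C_l(f,g) \in C^\infty(X,\operatorname{End}(E))$ and controlling the remainders by cutting off away from the diagonal yields \eqref{e:TfTg-exp}.

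To identify the first two coefficients, no new work is needed: both claims come for free from Theorem~\ref{t:comm}. The expansion \eqref{e:TfTg-exp} to order $p^0$ combined with \eqref{e:TfpTgp-prod} forces $C_0(f,g) = fg$, and subtracting the expansion for $T_{g,p}T_{f,p}$ from that of $T_{f,p}T_{g,p}$ and comparing the $p^{-1}$ term against \eqref{e:TfpTgp-comm} yields $C_1(f,g) - C_1(g,f) = i\{f,g\}$ (the printed statement $C_1(f,g)-C_1(f,g)$ in \eqref{e:C1fg-gf} is evidently a typo for $C_1(f,g)-C_1(g,f)$).

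The main obstacle is the recognition step in the middle paragraph: given a polynomial-Gaussian two-point kernel that one knows a priori to lie in $P_{p,\Lambda}(\cdot)P_{p,\Lambda}$, one must actually realize it as $P_{p,\Lambda}\circ M_{g_l}\circ P_{p,\Lambda}$ for a \emph{smooth} endomorphism symbol $g_l$ on $X$, to arbitrary order in $p^{-1}$ and with uniform control. This surjectivity of the Berezin-type symbol map is precisely where the hypothesis $|\mathcal K_\Lambda|=1$ is indispensable: when several $\mathbf k$ contribute to the same Landau level, the model projector $F_0$ is a sum of projections onto distinct Hermite sectors, the polynomial representation is no longer unique, and the inversion needed to pull the composition back to a multiplication operator breaks down. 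Making this step quantitative --- i.e.\ constructing the $g_l$ inductively while ensuring the $\mathcal O(p^{-k-1})$ remainder estimates of Definition~\ref{d:Toeplitz0}(ii) --- is the technical heart of the proof.
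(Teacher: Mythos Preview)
Your proposal is correct and follows essentially the same route as the paper: the paper packages your kernel-expansion framework as an algebra $\mathfrak A$ (Definition~\ref{d:Toeplitz}), shows $T_{f,p}T_{g,p}\in\mathfrak A$, and then proves (Theorem~\ref{t:characterization}) that under the singleton hypothesis every element of $\mathfrak A$ is a Toeplitz operator in the sense of Definition~\ref{d:Toeplitz0}; your identification of $C_0$ and of $C_1(f,g)-C_1(g,f)$ via Theorem~\ref{t:comm} is exactly how the paper concludes. The one place where the paper is more specific than your sketch is the recognition step you flag as the main obstacle: rather than a generic ``Hermite-calculus inversion,'' the paper combines a Wick-symbol boundedness estimate (Proposition~\ref{p:K0-Wick}) with an explicit Laguerre-polynomial computation to force the leading kernel coefficient $K_{0,x_0}$ to equal a \emph{constant} in $\operatorname{End}(E_{x_0})$ times $\mathcal P_{\Lambda_{\mathbf k}}$ (Proposition~\ref{p:Q00}); this constant is the value $g_0(x_0)$, a parity argument kills the half-integer term, and the $g_l$ are then produced inductively (Proposition~\ref{p:Tp-Tg0p}).
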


The idea to use Toeplitz operators for quantization of K\"ahler manifolds was suggested by Berezin in \cite{Berezin}. We refer the reader to \cite{Ali,Englis,ma:ICMtalk,Schlich10} for some recent surveys on Berezin-Toeplitz and geometric quantization. For a general compact K\"ahler manifold, the Berezin-Toeplitz quantization was constructed by Bordemann-Meinrenken-Schli\-chen\-maier \cite{BMS94}, using the theory of Toeplitz structures of Boutet de Monvel and Guillemin \cite{BG}. In this case, the quantum space is the space of holomorphic sections of tensor powers of the prequantum line bundle over the K\"ahler manifold. For an arbitrary symplectic manifold, Guillemin and Vergne suggested to use the kernel of the spin$^c$ Dirac operator as a quantum space. The corresponding Berezin-Toeplitz quantization was developed by Ma-Marinescu \cite{ma-ma:book,ma-ma08}. It is based on the asymptotic expansion of the Bergman kernel outside the diagonal obtained by Dai-Liu-Ma \cite{dai-liu-ma}. Another candidate for the quantum space was suggested by Guillemin-Uribe \cite{Gu-Uribe}. It is the space of eigensections of the renormalized Bochner Laplacian corresponding to eigenvalues localized near the origin. In this case, the Berezin-Toeplitz quantization was recently constructed in \cite{ioos-lu-ma-ma,Kor18}, based on Ma-Marinescu work: the Bergman kernel expansion from \cite{ma-ma08a} and Toeplitz calculus developed in \cite{ma-ma08} for spin$^c$ Dirac operator and K\"ahler case (also with an auxiliary bundle). We note also that Charles \cite{charles16} proposed recently another approach to quantization of symplectic manifolds and Hsiao-Marinescu \cite{HM} constructed a Berezin-Toeplitz quantization for eigensections of small eigenvalues in the case of complex manifolds. 

In our paper, we follow the approach to Toeplitz operator calculus developed in \cite{ma-ma:book,ma-ma08,ioos-lu-ma-ma,Kor18}. Asymptotic expansions of the kernels of spectral projections, which we need in our case, are proved in \cite{Kor20}. When $\Lambda=\Lambda_0$ is the lowest Landau level, our results are reduced to the results obtained in \cite{ioos-lu-ma-ma,Kor18}, which hold for any Riemannian metric $g$ (not necessarily satisfying the condition \eqref{e:aj-constant}). We mention that, in two simultaneous papers \cite{charles20a,charles20b}, Charles studies the same subject, using the methods of \cite{charles16}. 

There are several papers devoted to Toeplitz operators acting on spectral subspaces of the Landau Hamiltonian in $\RR^{2n}$ (see, for instance, \cite{BPR04,FP06,MR03,PRV13,RW02,RT08,RT09} and references therein). For constant magnetic fields, such operators are related with the Toeplitz operators acting on Bargmann-Fock type spaces of polyanalytic functions (see, for instance, \cite{AF14,AG10,EZ17,G10,keller-luef,roz-vas19,V00} and references therein). In particular, in \cite{keller-luef}, quantization schemes defined by polyanalytic Toeplitz operators are discussed. 

The paper is organized as follows. In Section~\ref{s:algebraA}, we introduce an algebra $\mathfrak A$ of integral operators on $X$ defined in terms of  conditions on their smooth Schwartz kernels. In Section~\ref{s:Toeplitz-in-A}, we show that Toeplitz operators in the sense of Definition \ref{d:Toeplitz0} belong to $\mathfrak A$. In Section \ref{s:Thm1}, using the results of the previous sections, we prove the first part of Theorem \ref{t:comm} and reduce the proof of its second part to a similar statement in the Euclidean case. The proof of this statement is given in Section~\ref{s:model}. In Section~\ref{s:charact}, we prove that the set of Toeplitz operators coincides with the algebra $\mathfrak A$ in the case when $\mathcal K_\Lambda$ consists of a single element, which gives a characterization of Toeplitz operators in terms of their Schwartz kernels in the form introduced in \cite[Theorem 4.9]{ma-ma08}. Using Theorems~\ref{t:characterization} and \ref{t:comm},  we easily complete the proof of Theorem \ref{t:algebra}.

This work was started as a joint project with L. Charles, but later we decided to work on our approaches separately. I would like to thank Laurent for his collaboration. 

\section{Algebra of integral operators}\label{s:algebraA}
In this section, we introduce an algebra $\mathfrak A$ of integral operators on $X$ defined in terms of  conditions on their smooth Schwartz kernels. Our motivation comes from the description of Toeplitz operators in terms of their Schwartz kernels introduced in \cite{ma-ma:book,ma-ma08}. Later, we will show that Toeplitz operators in the sense of Definition \ref{d:Toeplitz0} belong to this algebra, and, if $\mathcal K_\Lambda$ consists of a single element, the set of Toeplitz operators coincides with $\mathfrak A$. 

We introduce normal coordinates near an arbitrary point $x_0\in X$. 
We denote by $B^{X}(x_0,r)$ and $B^{T_{x_0}X}(0,r)$ the open balls in $X$ and $T_{x_0}X$ with center $x_0$ and radius $r$, respectively. Let $r_X>0$ be the injectivity radius of $X$. We identify $B^{T_{x_0}X}(0,r_X)$ with $B^{X}(x_0,r_X)$ via the exponential map $\exp^X_{x_0}: T_{x_0}X \to X$. Furthermore, we choose trivializations of the bundles $L$ and $E$ over $B^{X}(x_0,r_X)$,   identifying their fibers $L_Z$ and $E_Z$ at $Z\in B^{T_{x_0}X}(0,r_X)\cong B^{X}(x_0,r_X)$ with the spaces  $L_{x_0}$ and $E_{x_0}$ by parallel transport with respect to the connections $\nabla^L$ and $\nabla^E$ along the curve $\gamma_Z : [0,1]\ni u \to \exp^X_{x_0}(uZ)$. Denote by $\nabla^{L^p\otimes E}$ and $h^{L^p\otimes E}$ the connection and the Hermitian metric on the trivial bundle with fiber $(L^p\otimes E)_{x_0}$ induced by these trivializations.  

We choose an orthonormal base $\{e_j : j=1,\ldots,2n\}$ in $T_{x_0}X$ such that  
\begin{equation}\label{e:obase}
B_{x_0}e_{2k-1}=a_k e_{2k}, \quad B_{x_0}e_{2k}=-a_ke_{2k-1},\quad k=1,\ldots,n. 
\end{equation}
Thus, we have 
\begin{equation}\label{e:Bx0}
\mathbf B_{x_0}=\sum_{k=1}^{n}a_kdZ_{2k-1}\wedge dZ_{2k}. 
\end{equation}

We introduce a coordinate chart $\gamma_{x_0} : B(0,c)\subset  \RR^{2n}\to X$  defined on the ball $B(0,c):=\{Z\in \RR^{2n} : |Z|<c\}$ with some $c\in (0,r_X)$, which is given by the restriction of the exponential map $\exp_{x_0}^X$ composed with the linear isomorphism $\mathbb R^{2n}\to T_{x_0}X$ determined by the base $\{e_j \}$.

Let $dv_{TX}$ denote the Riemannian volume form of the Euclidean space $(T_{x_0}X, g_{x_0})$. We define a smooth function $\kappa$ on $B^{T_{x_0}X}(0,a^X)\cong B^{X}(x_0,a^X)$ by the equation
\[
dv_{X}(Z)=\kappa(Z)dv_{TX}(Z), \quad Z\in B^{T_{x_0}X}(0,a^X). 
\] 

Let $\{\Xi_p\}$ be a sequence  of linear operators $\Xi_p : L^2(X,L^p\otimes E)\to L^2(X,L^p\otimes E)$ with smooth kernel $\Xi_p(x,x^\prime)$ with respect to $dv_X$. Consider the fiberwise product $TX\times_X TX=\{(Z,Z^\prime)\in T_{x_0}X\times T_{x_0}X : x_0\in X\}$. Let $\pi : TX\times_X TX\to X$ be the natural projection given by  $\pi(Z,Z^\prime)=x_0$.  The kernel $\Xi_p(x,x^\prime)$ induces a smooth section $\Xi_{p,x_0}(Z,Z^\prime)$ of the vector bundle $\pi^*(\operatorname{End}(E))$ on $TX\times_X TX$ defined for all $x_0\in X$ and $Z,Z^\prime\in T_{x_0}X$ with $|Z|, |Z^\prime|<r_X$. 

Denote by $\mathcal P$ the Bergman kernel in $\mathbb R^{2n}$ given by 
\begin{equation}
\label{e:Bergman}
\mathcal P(Z,Z^\prime)=\frac{1}{(2\pi)^n}\prod_{j=1}^na_j \exp\left(-\frac 14\sum_{k=1}^na_k(|z_k|^2+|z_k^\prime|^2- 2z_k\bar z_k^\prime) \right) .
\end{equation}
We will use the same notation for the corresponding scalar function $\mathcal P(Z,Z^\prime)=\mathcal P(Z,Z^\prime)\operatorname{Id}_{E_{x_0}}$ on $T_{x_0}X\times T_{x_0}X$ with values in $\operatorname{End}(E_{x_0})$.

\begin{defn}[\cite{ma-ma:book,ma-ma08}]
We say that 
\begin{equation}
\label{e:full-off}
p^{-n}\Xi_{p,x_0}(Z,Z^\prime)\cong \sum_{r=0}^k(Q_{r,x_0}\mathcal P)(\sqrt{p}Z,\sqrt{p}Z^\prime)p^{-\frac{r}{2}}+\mathcal O(p^{-\frac{k+1}{2}})
\end{equation}
with some $Q_{r,x_0}\in \operatorname{End}(E_{x_0})[Z,Z^\prime]$, $0\leq r\leq k$,  depending smoothly on $x_0\in X$, if there exist $\varepsilon^\prime\in (0,r_X]$ and $C_0>0$ with the following property:
for any $l\in \mathbb N$, there exist $C>0$ and $M>0$ such that for any $x_0\in X$, $p\geq 1$ and $Z,Z^\prime\in T_{x_0}X$, $|Z|, |Z^\prime|<\varepsilon^\prime$, we have 
\begin{multline*}
\Bigg|p^{-n}\Xi_{p,x_0}(Z,Z^\prime)\kappa^{\frac 12}(Z)\kappa^{\frac 12}(Z^\prime) -\sum_{r=0}^k(Q_{r,x_0}\mathcal P)(\sqrt{p} Z, \sqrt{p}Z^\prime)p^{-\frac{r}{2}}\Bigg|_{\mathcal C^{l}(X)}\\ 
\leq Cp^{-\frac{k+1}{2}}(1+\sqrt{p}|Z|+\sqrt{p}|Z^\prime|)^M\exp(-\sqrt{C_0p}|Z-Z^\prime|)+\mathcal O(p^{-\infty}).
\end{multline*}
\end{defn}

Here $\mathcal C^{m^\prime}(X)$ is the $\mathcal C^{m^\prime}$-norm for the parameter $x_0\in X$. We say that $G_p=\mathcal O(p^{-\infty})$ if for any $l, l_1\in \mathbb N$, there exists $C_{l,l_1}>0$ such that $\mathcal C^{l_1}$-norm of $G_p$ is estimated from above by $C_{l,l_1}p^{-l}$. 

The expansion \eqref{e:full-off} will be called the full off-diagonal expansion for the kernel of $\Xi_p$. 

\begin{defn}\label{d:Toeplitz}
We introduce the class $\mathfrak A$, which consists of sequences of linear operators  $\{T_p: L^2(X,L^p\otimes E)\to L^2(X,L^p\otimes E)\}$, satisfying the following conditions:
\begin{description}
\item[(i)] For any $p\in \mathbb N$, we have
\[
T_p=P_{p,\Lambda}T_pP_{p,\Lambda}. 
\]
\item[(ii)] For any $\varepsilon_0>0$ and $l\in \mathbb N$, there exists $C>0$ such that 
\[
|T_{p}(x,x^\prime)|\leq Cp^{-l}
\]
for any $p\in \mathbb N$ and $(x,x^\prime)\in X\times X$ with $d(x,x^\prime)>\varepsilon_0$. (Here $d(x,x^\prime)$ is the geodesic distance.)
\item[(iii)] 
The  kernel of $T_p$ admits the full off-diagonal expansion 
\[
p^{-n}T_{p,x_0}(Z,Z^\prime)\cong \sum_{r=0}^kK_{r,x_0}(\sqrt{p}Z,\sqrt{p}Z^\prime)p^{-\frac{r}{2}}+\mathcal O(p^{-\frac{k+1}{2}})
\] 
for any $k\in \mathbb N$, $x_0\in X$, $Z,Z^\prime\in T_{x_0}X$, $|Z|, |Z^\prime|<\varepsilon^\prime$ with some $\varepsilon^\prime\in (0,r_X/4)$,
with 
\[
K_{r,x_0}(Z,Z^\prime)=(\mathcal Q_{r,x_0}\mathcal P)(Z,Z^\prime), 
\]
where $\mathcal Q_{r,x_0}\in \operatorname{End}(E_{x_0})[Z,Z^\prime]$ is a family of polynomials, depending smoothly on $x_0$, of the same parity as $r$.   
\end{description}
\end{defn} 

One can easily check the following properties of $\mathfrak A$. 

\begin{prop}\label{p:boundedA}
The set $\mathfrak A$ is an involutive algebra. For any $\{T_p\}\in \mathfrak A$, the operator $T_p$  is bounded in $L^2(X,L^p\otimes E)$ with the norm, uniformly bounded in $p$. 
\end{prop}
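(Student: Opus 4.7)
The proposition has three parts: closure under adjoints, closure under composition, and uniform $L^2$-boundedness. My plan is to verify each of the three conditions defining $\mathfrak A$ under the two operations, then argue the boundedness separately. The first two parts should be algebraic once the right local computation is set up; the third is the main technical point.

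For the involutive property, since $P_{p,\Lambda}$ is self-adjoint, condition (i) is preserved under $T_p \mapsto T_p^*$. Condition (ii) follows from $T_p^*(x,y) = T_p(y,x)^*$ and symmetry of the geodesic distance. For condition (iii), I would use the identity $\mathcal P(Z',Z) = \overline{\mathcal P(Z,Z')}$ (read off directly from \eqref{e:Bergman}) to derive from the expansion for $T_p$ an analogous expansion for $T_p^*$ with polynomials $\widetilde{\mathcal Q}_{r,x_0}(Z,Z') = \mathcal Q_{r,x_0}(Z',Z)^*$, which have the same parity as the originals.

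For the algebra property, I take $\{T_p\}, \{S_p\} \in \mathfrak A$ and check the three conditions for $T_p S_p$. Condition (i) is immediate from $P_{p,\Lambda}^2 = P_{p,\Lambda}$. For (ii), I split the composition integral $(T_p S_p)(x,y) = \int T_p(x,z) S_p(z,y)\, dv(z)$ according to whether $d(z,x) \leq \varepsilon_0/2$ or not; on each piece one factor is $\mathcal O(p^{-\infty})$ uniformly, while a local application of (iii) shows the $L^1$ norm of the remaining factor grows at worst polynomially in $p$. For (iii), I localize near a point $x_0$ and write
\[
(T_p S_p)_{x_0}(Z,Z')\kappa^{\frac12}(Z)\kappa^{\frac12}(Z') = \int A_p(Z,Z'')\, B_p(Z'',Z')\, dv_{TX}(Z''),
\]
where $A_p$, $B_p$ denote the $\kappa^{1/2}$-symmetrized kernels of $T_p$, $S_p$. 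Substituting their expansions and rescaling $u = \sqrt p\, Z''$ reduces (iii) to the algebraic lemma that for polynomials $Q_1, Q_2$ of parities $r_1, r_2$, the integral $\int (Q_1 \mathcal P)(U,u)(Q_2 \mathcal P)(u,V)\, du$ is of the form $R(U,V)\mathcal P(U,V)$ with $R$ a polynomial of parity $r_1+r_2$. This lemma will follow by direct Gaussian integration using the explicit form of $\mathcal P$ together with its reproducing structure. Collecting terms with $r+s=l$ then produces the expansion for $T_p S_p$ with polynomials of parity $l$.

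The uniform $L^2$-boundedness is the step I expect to be hardest. The naive Schur test and Hilbert--Schmidt estimate both yield bounds that grow polynomially in $p$, because conditions (i)--(iii) do not restrict the degrees of the polynomials $\mathcal Q_r$, and at scale $\sqrt p$ these polynomials can contribute factors as large as $p^{m/2}$. So the uniform bound has to exploit condition (i) in an essential way: since $\|P_{p,\Lambda}\| = 1$, it suffices to bound $T_p$ on $\operatorname{Im}(P_{p,\Lambda})$, where the reproducing properties of the Bergman-type kernel $\mathcal P$ combined with the specific structure of the $\mathcal Q_r$ should force their effective contribution to stay uniformly bounded. I would model this step on the analogous uniform boundedness arguments in \cite{ma-ma:book, ma-ma08, Kor18}.
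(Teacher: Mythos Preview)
The paper gives no proof of this proposition beyond the remark that ``one can easily check'' it, so there is nothing to compare against at the level of detail. Your outline for the involution and composition properties is correct and standard; the key lemma you isolate (that $(Q_1\mathcal P)\ast(Q_2\mathcal P)=R\,\mathcal P$ with $R$ a polynomial of parity $r_1+r_2$) is exactly what is used elsewhere in the paper and in \cite{ma-ma08a}.

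Your assessment of the uniform boundedness, however, is off: the Schur test works directly and does not require condition~(i). The point you are missing is that the expansion in (iii) holds for \emph{every} base point $x_0$, with constants uniform in $x_0$. To bound $\int_X|T_p(x,x')|\,dv_X(x')$ for a given $x$, take $x_0=x$, so that $Z=0$ in the local expansion. The leading term then contributes
\[
p^n\int_{|Z'|<\varepsilon'}\big|(\mathcal Q_{0,x}\mathcal P)(0,\sqrt{p}\,Z')\big|\,dZ'
=\int_{|W|<\varepsilon'\sqrt{p}}\big|(\mathcal Q_{0,x}\mathcal P)(0,W)\big|\,dW,
\]
which is bounded uniformly in $x$ and $p$: the factor $|\mathcal P(0,W)|$ is Gaussian in $W$, the polynomial $\mathcal Q_{0,x}(0,\cdot)$ has coefficients depending smoothly on $x\in X$ (compact), and the first argument is frozen at $0$ so no factor of $\sqrt p$ ever appears. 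The remainder term in (iii) and the far-off-diagonal contribution from (ii) are handled in the same way after the same substitution. The symmetric bound $\sup_{x'}\int|T_p(x,x')|\,dv_X(x)$ follows by centering at $x'$ instead. Your worry about polynomial growth in $p$ arises only if you insist on a single fixed center $x_0$ and let $Z$ range over a ball of radius $\varepsilon'$; re-centering at each point removes it entirely. Condition~(i) is not needed for this step.
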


For any $F, G\in C^\infty(T_{x_0}X\times T_{x_0}X, \operatorname{End}(E_{x_0}))$, exponentially decreasing away the diagonal, we denote by $F\ast G\in C^\infty(T_{x_0}X\times T_{x_0}X, \operatorname{End}(E_{x_0}))$ the smooth kernel of the composition of the corresponding integral operators in $L^2(T_{x_0}X, E_{x_0})$:
\[
(F\ast G)(Z,Z^\prime)=\int_{T_{x_0}X} F(Z,Z^{\prime\prime})G(Z^{\prime\prime} ,Z^\prime)dZ^{\prime\prime}. 
\]

\begin{prop}\label{p:algebraA}
For any $\{T^\prime_p\}, \{T^{\prime\prime}_p\}\in \mathfrak A$, the coefficients $K_{r,x_0}(Z,Z^\prime)$ in the full off-diagonal expansion for the kernel of the composition $\{T^\prime_p\circ T^{\prime\prime}_p\}$ are related with the analogous coefficients $K^\prime_{r,x_0}(Z,Z^\prime)$ and $K^{\prime\prime}_{r,x_0}(Z,Z^\prime)$ for  $\{T^\prime_p\}$ and $\{T^{\prime\prime}_p\}$, respectively, by 
\begin{equation}\label{e:comp-K}
K_{r,x_0}=\sum_{r_1+r_2=r}K^\prime_{r_1,x_0}\ast K^{\prime\prime}_{r_2,x_0}. 
\end{equation}
\end{prop}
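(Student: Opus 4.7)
The plan is to derive the expansion of the kernel of $T'_p\circ T''_p$ directly from the integral formula
\[
(T'_p\circ T''_p)(x,x')=\int_X T'_p(x,y)\,T''_p(y,x')\,dv_X(y),
\]
by substituting the full off-diagonal expansions of the two factors and rescaling. Fix $x_0\in X$ and work in normal coordinates near $x_0$, so that $dv_X(Z'')=\kappa(Z'')\,dv_{TX}(Z'')$ on the injectivity ball. First I would localize: for $Z,Z'\in T_{x_0}X$ with $|Z|,|Z'|<\varepsilon'$, either $d(\exp^X_{x_0}Z,\exp^X_{x_0}Z'')\ge\varepsilon'$ or $d(\exp^X_{x_0}Z'',\exp^X_{x_0}Z')\ge\varepsilon'$ whenever $|Z''|\ge 3\varepsilon'$, so by Definition~\ref{d:Toeplitz}(ii) combined with the Gaussian bound on the other factor coming from its off-diagonal expansion, the contribution of this region is $\mathcal{O}(p^{-\infty})$. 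The analysis thus reduces to an integral over $B^{T_{x_0}X}(0,3\varepsilon')$.

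On this ball I would rewrite \eqref{e:full-off} for each factor in the form
\[
T'_{p,x_0}(Z,Z'')=p^n\kappa^{-1/2}(Z)\kappa^{-1/2}(Z'')\Bigl(\sum_{r_1=0}^{k}K'_{r_1,x_0}(\sqrt{p}Z,\sqrt{p}Z'')p^{-r_1/2}+R'_{k,p}(Z,Z'')\Bigr),
\]
together with the analogous expression for $T''_{p,x_0}(Z'',Z')$. Inserting both into the localized integral, the factor $\kappa^{-1/2}(Z'')\kappa^{-1/2}(Z'')$ produced at the intermediate point exactly cancels the Jacobian $\kappa(Z'')$ coming from $dv_X$, while the factors $\kappa^{-1/2}(Z)$ and $\kappa^{-1/2}(Z')$ move to the left-hand side. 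After the substitution $z''=\sqrt{p}\,Z''$, which contributes $p^{-n}dz''$ and cancels one of the two factors $p^n$, the main contribution reads
\[
p^{-n}(T'_p\circ T''_p)_{x_0}(Z,Z')\kappa^{1/2}(Z)\kappa^{1/2}(Z')=\sum_{r=0}^{k}p^{-r/2}\sum_{r_1+r_2=r}(K'_{r_1,x_0}\ast K''_{r_2,x_0})(\sqrt{p}Z,\sqrt{p}Z')+\mathcal{E}_{k,p},
\]
where the $z''$-integration has been extended from the rescaled ball $B(0,3\varepsilon'\sqrt{p})$ to all of $\mathbb{R}^{2n}$ at the cost of an $\mathcal{O}(p^{-\infty})$ error, thanks to the exponential decay of $\mathcal{P}$ present in every $K_{r,x_0}$. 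Comparing this with the off-diagonal expansion of $T'_p\circ T''_p$ guaranteed by Proposition~\ref{p:boundedA} yields \eqref{e:comp-K}.

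The main technical point is the control of the remainder $\mathcal{E}_{k,p}$, which consists of cross products of a principal expansion term with a truncation remainder, products of two remainders, and tails from the localization. For each such contribution I would use the pointwise bound $|R'_{k,p}(Z,Z'')|\le Cp^{-(k+1)/2}(1+\sqrt{p}|Z|+\sqrt{p}|Z''|)^Me^{-\sqrt{C_0p}|Z-Z''|}$ supplied by the definition of the full off-diagonal expansion, multiply by the matching bound for the other factor, and exploit the triangle inequality $|Z-Z''|+|Z''-Z'|\ge|Z-Z'|$ to extract a common factor $e^{-\sqrt{C_0p}|Z-Z'|/2}$. The remaining half of the exponential absorbs the polynomial weights in $z''$ after integration, producing an overall $p^{-(k+1)/2}$ prefactor with the required exponential decay in $|Z-Z'|$. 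The $\mathcal{C}^l$-norms in the parameter $x_0$ are handled by differentiating under the integral sign: derivatives of $\mathcal{P}$ and of the polynomials $\mathcal{Q}_{r,x_0}$ only enlarge the polynomial weights, which are absorbed in the same fashion. This off-diagonal composition estimate, which is essentially the one used in \cite{ma-ma:book,ma-ma08,Kor18}, is the principal technical obstacle in the argument.
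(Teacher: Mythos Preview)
The paper does not actually give a proof of this proposition: it is stated, together with Proposition~\ref{p:boundedA}, under the single sentence ``One can easily check the following properties of $\mathfrak A$.'' Your outline supplies precisely the standard argument (localize via condition~(ii), rescale $Z''\mapsto \sqrt{p}\,Z''$, cancel the $\kappa$-factors, and control the remainders by the weighted-Gaussian convolution estimate), which is the computation underlying \cite[Lemma~7.2.4]{ma-ma:book} and \cite[Lemma~4.6]{ma-ma08a} that the paper implicitly invokes. The approach is correct and matches what the references do; there is nothing to compare.
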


\section{Description of the kernels of Toeplitz operators}\label{s:Toeplitz-in-A}  
In this section, we show that any Toeplitz operator $\{T_{p}\}$ in the sense of Definition \ref{d:Toeplitz0} belongs to the algebra $\mathfrak A$ introduced in the previous section.  It is easy to see that it suffices to do this for the operator $\{T_{f,p}\}$  determined by $f\in C^\infty(X,\operatorname{End}(E))$.

Let $P_{p,\Lambda}(x,x^\prime)$, $x,x^\prime\in X$, be the smooth kernel of $P_{p,\Lambda}$ with respect to the Riemannian volume form $dv_X$. The Schwartz kernel of $T_{f,p}$ is given by  
\begin{equation}\label{e:Tfp}
T_{f,p}(x,x^\prime)=\int_X P_{p,\Lambda}(x,y)f(y)P_{p,\Lambda}(y,x^{\prime})dv_X(y). 
\end{equation}

\begin{lem}
For any $\varepsilon>0$ and $l,m\in \mathbb N$, there exists $C>0$ such that for any $p\geq 1$ and $(x,x^\prime)\in X\times X$ with $d(x,x^\prime)>\varepsilon$ we have
\[
|T_{f,p}(x,x^\prime)|_{C^m}\leq Cp^{-l}. 
\]
\end{lem}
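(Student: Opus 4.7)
My plan is to exploit the off-diagonal decay of the spectral projection kernel $P_{p,\Lambda}(x,y)$ and to split the integral \eqref{e:Tfp} according to whether $y$ is close to $x$ or to $x^\prime$. The key input, established in \cite{Kor20}, is that under the constancy assumption \eqref{e:aj-constant} the kernel $P_{p,\Lambda}(x,y)$ is smooth and satisfies Agmon-type off-diagonal estimates: for any $l, m\in \mathbb N$ and any $\delta>0$ there exists $C>0$ such that
\[
|P_{p,\Lambda}(x,y)|_{C^m}\leq C p^{-l}\quad\text{whenever } d(x,y)\geq \delta.
\]
(More precisely, one has a Gaussian-type bound $|P_{p,\Lambda}(x,y)|\leq C p^n \exp(-c\sqrt{p}\,d(x,y))$ away from the diagonal, which immediately gives decay faster than any power of $p^{-1}$ uniformly on $\{d(x,y)\geq \delta\}$; together with the full off-diagonal expansion it controls all $C^m$ norms similarly.)

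Given this, I would argue as follows. Fix $\varepsilon>0$ and $(x,x^\prime)\in X\times X$ with $d(x,x^\prime)>\varepsilon$. For every $y\in X$ the triangle inequality forces either $d(x,y)>\varepsilon/2$ or $d(y,x^\prime)>\varepsilon/2$. Let $\{\chi_1,\chi_2\}$ be a smooth partition of unity on $X\times X\times X$ subordinate to the open cover
\[
U_1=\{(x,x^\prime,y)\,:\,d(x,y)>\varepsilon/3\},\qquad U_2=\{(x,x^\prime,y)\,:\,d(y,x^\prime)>\varepsilon/3\},
\]
restricted to $\{(x,x^\prime)\,:\,d(x,x^\prime)>\varepsilon\}$. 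Writing
\[
T_{f,p}(x,x^\prime)=\sum_{j=1}^2 \int_X \chi_j(x,x^\prime,y)\,P_{p,\Lambda}(x,y)f(y)P_{p,\Lambda}(y,x^\prime)\,dv_X(y),
\]
on the support of $\chi_1$ one has $d(x,y)>\varepsilon/3$, so the first factor $P_{p,\Lambda}(x,y)$ is $\mathcal O(p^{-l})$ in $C^m$ for every $l,m$, while the second factor is uniformly bounded in operator/pointwise norm by the standard on-diagonal bound $|P_{p,\Lambda}(y,x^\prime)|\leq C p^n$. The analogous estimate applies on the support of $\chi_2$.

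Differentiation in $x$ or $x^\prime$ falls only on the $P_{p,\Lambda}$ factor whose variable is $x$ (respectively $x^\prime$), together with the smooth cut-off $\chi_j$; the same off-diagonal/on-diagonal dichotomy then yields the $C^m$ bound. Since the decay in the off-diagonal estimate is faster than any polynomial in $p^{-1}$ and the on-diagonal polynomial growth $p^{n+\text{const}}$ from the other factor and from the volume of $X$ is absorbed into the $p^{-l}$, this completes the argument. The main (and essentially only) nontrivial ingredient is the off-diagonal estimate for $P_{p,\Lambda}(x,y)$ from \cite{Kor20}; everything else is a standard partition-of-unity localization.
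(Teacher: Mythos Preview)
Your argument is correct and is essentially the same as the paper's: the paper simply invokes the integral representation \eqref{e:Tfp}, the off-diagonal exponential estimate for $P_{p,\Lambda}$ from \cite{Kor20}, and refers to \cite[Lemma~4.2]{ma-ma08a}, whose proof is precisely the triangle-inequality splitting you have written out. You have just made the standard localization argument explicit.
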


Here $|T_{f,p}(x, x^\prime)|_{\mathscr{C}^k}$ denotes the pointwise $\mathscr{C}^k$-seminorm of the section $T_{f,p}$ at a point $(x, x^\prime)\in X\times X$, which is the sum of the norms induced by $h^L, h^E$ and $g$ of the derivatives up to order $k$ of $T_{f,p}$ with respect to the connection $\nabla^{L^p\otimes E}$ and the Levi-Civita connection $\nabla^{TX}$ evaluated at $(x, x^\prime)$.

\begin{proof}
The proof follows from \eqref{e:Tfp} and the off-diagonal exponential estimate for $P_{p,\Lambda}(x,x^\prime)$ \cite{Kor20} as in \cite[Lemma 4.2]{ma-ma08a}.
\end{proof}

By \cite{Kor20}, for any $k\in \mathbb N$, the kernel of $P_{p,\Lambda}$ admits the full off-diagonal expansion 
\begin{equation}\label{e:PL-exp}
p^{-n}P_{p,\Lambda,x_0}(Z,Z^\prime)\cong
\sum_{r=0}^kF_{r,x_0}(\sqrt{p} Z, \sqrt{p}Z^\prime)p^{-\frac{r}{2}}+\mathcal O(p^{-\frac{k+1}{2}}).
\end{equation}
Here, for any $r\geq 0$, the coefficient $F_{r,x_0}\in C^\infty(T_{x_0}X\times T_{x_0}X, \operatorname{End}(E_{x_0}))$ has the form
\begin{equation}\label{e:Fr}
F_{r,x_0}(Z,Z^\prime)=J_{r,x_0}(Z,Z^\prime)\mathcal P(Z,Z^\prime),
\end{equation}
where $J_{r,x_0}(Z,Z^\prime)$ is a polynomial in $Z, Z^\prime$, depending smoothly on $x_0$, 
of the same parity as $r$ and $\operatorname{deg} J_{r,x_0}\leq \kappa(\Lambda)+3r$, where $\kappa(\Lambda)=\max \{|\mathbf k| : \Lambda_{\mathbf k}=\Lambda \}$. 

Recall the description of the leading coefficient  $F_{0,x_0}(Z,Z^\prime)$.  We introduce the connection on the trivial line bundle on $T_{x_0}X\cong \RR^{2n}$, with the connection one-form $\alpha$ given by 
\begin{equation}\label{e:Aflat}
\alpha_=\sum_{k=1}^{n}\frac{1}{2}a_k(Z_{2k-1}\, dZ_{2k}-Z_{2k}\, dZ_{2k-1}). 
\end{equation}
Its curvature is constant: $d\alpha=\mathbf B_{x_0}$. 

Let $\mathcal H^{(0)}$ be the associated Bochner Laplacian on $C^\infty(\RR^{2n})$:
\begin{equation}\label{e:DeltaL0p}
\mathcal H^{(0)}=(d-i\alpha)^*(d-i\alpha).
\end{equation}
The spectrum of $\mathcal H^{(0)}$ coincides with $\Sigma$ and consists of eigenvalues of infinite multiplicity. Considered as an operator on $C^\infty(T_{x_0}X,E_{x_0})\cong C^\infty(\RR^{2n},E_{x_0})$, the operator $\mathcal H^{(0)}\otimes \operatorname{id}_{E_{x_0}}$ is exactly the model operator at $x_0$ mentioned in Introduction. So the assumption \eqref{e:aj-constant} guarantees that, in a suitable coordinates, the model operator is independent of $x_0$. Let $\mathcal P_{\Lambda}$ be the orthogonal projection on the eigenspace of $\mathcal H^{(0)}$ with the eigenvalue $\Lambda$ (see Section \ref{s:model} for more information on $\mathcal P_{\Lambda}$).

The leading coefficient in \eqref{e:PL-exp} is given by 
\begin{equation}\label{e:F0}
F_{0,x_0}(Z,Z^\prime)=\mathcal P_{\Lambda}(Z,Z^\prime).
\end{equation}
As in \cite[Lemma 4.7]{ma-ma08a}, using an explicit formula for $F_{1,x_0}$ given in \cite{Kor20}, one can show that, for any $Z, Z^\prime\in T_{x_0}X$, $F_{1,x_0}(Z,Z^\prime)$ is a scalar operator in $E_{x_0}$, and, therefore, commutes with any operator in $E_{x_0}$.

Since $P_{p,\Lambda}$ is a projection, we have 
\begin{gather}
\mathcal P_{\Lambda}\ast F_{1,x_0}+F_{1,x_0} \ast \mathcal P_{\Lambda}=F_{1,x_0},\label{e:F1} \\
\mathcal P_{\Lambda}\ast F_{2,x_0}+F_{1,x_0}\ast F_{1,x_0}+ F_{2,x_0} \ast \mathcal P_{\Lambda}=F_{2,x_0}. \label{e:F2}
\end{gather}

\begin{lem}
For any $f\in C^\infty(X, \operatorname{End}(E))$, the operator $\{T_{f,p}\}$ belongs to $\mathfrak A$. The coefficients $K_{r,x_0}(f)\in C^\infty(T_{x_0}X\times T_{x_0}X, \operatorname{End}(E_{x_0}))$ of the full off-diagonal expansion for the kernel of $T_{f,p}$ are given by 
\begin{equation}\label{e:Krf}
K_{r,x_0}(f)=\sum_{r_1+r_2+k=r}F_{r_1,x_0}\ast \frac{1}{k!}(d^kf_{x_0})_0\cdot F_{r_2,x_0},
\end{equation}
where 
\[
(d^kf_{x_0})_0(Z)=\sum_{|\alpha|=k}\frac{\partial^\alpha f_{x_0}}{\partial Z^\alpha}(0)Z^\alpha
\]
and we denote 
\[
(d^kf_{x_0})_0\cdot F_{r_2,x_0}(Z,Z^\prime)=(d^kf_{x_0})_0(Z) F_{r_2,x_0}(Z,Z^\prime)
\]
In particular, we have
\begin{align}
K_{0,x_0}(f)=&f(x_0)\mathcal P_{\Lambda},\label{e:K0f}\\
K_{1,x_0}(f)=& f(x_0)F_{1,x_0}+\mathcal P_{\Lambda}\ast (df_{x_0})_0\cdot \mathcal P_{\Lambda},\label{e:K1f} 
\end{align}
and, for $f\in C^\infty(X)$
\begin{multline}
K_{2,x_0}(f)= f(x_0)F_{2,x_0}+F_{1,x_0}\ast (df_{x_0})_0\cdot \mathcal P_{\Lambda} \\ + \mathcal P_{\Lambda}\ast (df_{x_0})_0\cdot F_{1,x_0}+ \mathcal P_{\Lambda}\ast \frac 12 (d^2f_{x_0})_0\cdot \mathcal P_{\Lambda}. \label{e:K2f}
\end{multline}
\end{lem}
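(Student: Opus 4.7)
The plan is to verify the three conditions of Definition \ref{d:Toeplitz} for $\{T_{f,p}\}$ in turn, and then extract the coefficients by a careful expansion. Condition~(i) is automatic: since $P_{p,\Lambda}$ is an orthogonal projection, $P_{p,\Lambda} T_{f,p} P_{p,\Lambda} = P_{p,\Lambda}^2 f P_{p,\Lambda}^2 = T_{f,p}$. Condition~(ii) is exactly the content of the preceding lemma (away-from-diagonal smallness of the Schwartz kernel). Thus the whole work lies in condition~(iii), i.e.\ producing the full off-diagonal expansion and identifying the coefficients \eqref{e:Krf}.

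For condition~(iii), I would fix $x_0\in X$, work in the normal coordinate chart around $x_0$, and write the kernel of $T_{f,p}$ in these coordinates as a convolution
\[
T_{f,p,x_0}(Z,Z^\prime)=\int_{T_{x_0}X}P_{p,\Lambda,x_0}(Z,Z^{\prime\prime})f_{x_0}(Z^{\prime\prime})P_{p,\Lambda,x_0}(Z^{\prime\prime},Z^\prime)\kappa(Z^{\prime\prime})\,dv_{TX}(Z^{\prime\prime}),
\]
up to an $\cO(p^{-\infty})$ term: the off-diagonal exponential decay of $P_{p,\Lambda}$ proved in \cite{Kor20} lets me restrict $Z^{\prime\prime}$ to a small ball $|Z^{\prime\prime}|<\varepsilon^\prime$ with such an error, uniformly in $x_0$. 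On this small ball I Taylor-expand $f_{x_0}(Z^{\prime\prime})=\sum_{k=0}^{N}\tfrac{1}{k!}(d^kf_{x_0})_0(Z^{\prime\prime})+\cO(|Z^{\prime\prime}|^{N+1})$ and $\kappa(Z^{\prime\prime})=1+\cO(|Z^{\prime\prime}|^2)$, and substitute the expansion \eqref{e:PL-exp} of $P_{p,\Lambda,x_0}$ for each of the two factors. The change of variables $Z^{\prime\prime}=w/\sqrt{p}$ converts each homogeneous term $(d^kf_{x_0})_0(Z^{\prime\prime})$ into $p^{-k/2}(d^kf_{x_0})_0(w)$ and turns the integral into a convolution in $w$ of polynomials-times-$\cP$ against polynomials-times-$\cP$; the Jacobian is absorbed into the $p^{-n}$ normalization. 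Collecting the $p^{-r/2}$-coefficient gives exactly \eqref{e:Krf}. The parity requirement in condition (iii) then follows because $F_{r_i,x_0}$ has parity $r_i$ in $(Z,Z^\prime)$, $(d^kf_{x_0})_0$ has parity $k$, and parities are additive under the convolution $\ast$, so $K_{r,x_0}(f)$ has parity $r_1+r_2+k=r$. The estimate on the remainder is controlled by the Gaussian decay of $\cP$, the polynomial bound in $(1+\sqrt{p}|Z|+\sqrt{p}|Z^\prime|)^{M}$ in \eqref{e:full-off}, and the standard fact that Taylor remainders $\cO(|Z^{\prime\prime}|^{N+1})$ produce after rescaling factors $p^{-(N+1)/2}$ times integrable Gaussian-polynomial factors, so that the error in the order-$k$ expansion is $\cO(p^{-(k+1)/2})$ with the required off-diagonal exponential bound.

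It remains to specialize to $r=0,1,2$. For $r=0$ the only contribution is $(r_1,r_2,k)=(0,0,0)$, giving $\cP_{\Lambda}\ast f(x_0)\cP_{\Lambda}=f(x_0)\cP_{\Lambda}\ast\cP_{\Lambda}=f(x_0)\cP_{\Lambda}$ since $\cP_{\Lambda}$ is a projection, proving \eqref{e:K0f}. For $r=1$ the triples are $(1,0,0),(0,1,0),(0,0,1)$; the first two combine, using the scalar nature of $F_{1,x_0}$ in the $E_{x_0}$-fiber (hence its commutation with $f(x_0)$) and the identity \eqref{e:F1}, into $f(x_0)(F_{1,x_0}\ast\cP_{\Lambda}+\cP_{\Lambda}\ast F_{1,x_0})=f(x_0)F_{1,x_0}$, which combined with the third gives \eqref{e:K1f}. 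For $r=2$ and scalar $f$, the six triples are $(2,0,0),(0,2,0),(1,1,0),(1,0,1),(0,1,1),(0,0,2)$; the first three combine via \eqref{e:F2} into $f(x_0)F_{2,x_0}$, and the remaining three are exactly the last three terms of \eqref{e:K2f}.

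The main obstacle I expect is the careful bookkeeping of the rescaling and the uniform control of remainders: one must track simultaneously the Taylor remainder in $f$, the cutoff error from restricting $Z^{\prime\prime}$ to $|Z^{\prime\prime}|<\varepsilon^\prime$, and the remainder in \eqref{e:PL-exp} itself, and verify that after convolution and rescaling each of them fits the Gaussian-with-polynomial-prefactor form required by Definition~\ref{d:Toeplitz}(iii). This is the standard but delicate step done in \cite[Lemma~4.6]{ma-ma08a}, and the argument adapts verbatim once one uses the expansion \eqref{e:PL-exp} from \cite{Kor20} in place of the Bergman kernel expansion of \cite{ma-ma08a}.
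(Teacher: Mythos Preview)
Your proposal is correct and follows essentially the same route as the paper's proof: write the kernel of $T_{f,p}$ as the convolution \eqref{e:Tfp}, insert the expansion \eqref{e:PL-exp} for each projection factor, Taylor-expand $f_{x_0}$ at $0$, rescale, and collect terms to obtain \eqref{e:Krf}; then use \eqref{e:F1}, \eqref{e:F2}, and the scalarity of $F_{1,x_0}$ to specialize to $r=0,1,2$. The paper is terser (it simply points to \cite[Lemma~4.6]{ma-ma08a} and Proposition~\ref{p:algebraA}), but your more explicit verification of conditions~(i)--(iii) and your parity argument add nothing new and omit nothing essential.
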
 
 
\begin{proof}
The proof goes along the same lines as the proof of \cite[Lemma 4.6]{ma-ma08a}, so we just highlight the main points.

The fact that $\{T_{f,p}\}$ belongs to $\mathfrak A$ follows easily from \eqref{e:Tfp}, \eqref{e:PL-exp} and Proposition \ref{p:algebraA}.
By \eqref{e:Tfp} and \eqref{e:PL-exp}, we also get
\begin{multline*}
p^{-n}T_{f,p,x_0}(Z,Z^\prime)\cong \sum_{r=0}^\infty p^{-\frac{r}{2}} \sum_{r_1+r_2=r} \int F_{r_1,x_0}(\sqrt{p} Z, \sqrt{p}W)f_{x_0}(W)\times \\ \times  F_{r_2,x_0}(\sqrt{p} W, \sqrt{p}Z^\prime)dW.
\end{multline*}
Now we write the Taylor expansion for $f_{x_0}$ at $0$:
\[
f_{x_0}(W) =\sum_{\alpha\in \ZZ_+^{2n}} \frac{\partial^\alpha f_{x_0}}{\partial W^\alpha}(0)\frac{W^\alpha}{\alpha!},
\]
We infer that
\begin{multline*}
p^{-n}T_{f,p,x_0}(Z,Z^\prime)\cong \sum_{\alpha\in \ZZ_+^{2n}} \sum_{r=0}^\infty p^{-\frac{r}{2}} \sum_{r_1+r_2+|\alpha|=r} \int F_{r_1,x_0}(\sqrt{p} Z, \sqrt{p}Z^\prime)\times \\ \times \frac{\partial^\alpha f_{x_0}}{\partial W^\alpha}(0)\frac{(\sqrt{p} W)^\alpha}{\alpha!}   F_{r_2,x_0}(\sqrt{p} W, \sqrt{p}Z^\prime)dW,
\end{multline*} 
which proves \eqref{e:Krf}.
 
Using \eqref{e:Krf}, \eqref{e:F1}, \eqref{e:F2} and the fact that $F_{1,x_0}(Z,Z^\prime)$ commutes with any operator in $E_{x_0}$, one can easily derive \eqref{e:K0f}, \eqref{e:K1f} and \eqref{e:K2f}.
\end{proof}

\section{The composition theorem}\label{s:Thm1}
Now we will use the results of the previous sections to prove Theorem \ref{t:comm}. In this section, we will prove the first part of the theorem and reduce the proof of the second part to the proof of a similar statement in the Euclidean case, which will be given in the next section.   

Let $f,g\in C^\infty(X, \operatorname{End}(E))$. By Proposition \ref{p:algebraA}, the operator $T_{f,p}T_{g,p}$ belongs to $\mathfrak A$, and, by \eqref{e:comp-K}, the coefficients $K_{r,x_0}(f,g)\in C^\infty(T_{x_0}X\times T_{x_0}X, \operatorname{End}(E_{x_0}))$ of the full off-diagonal expansion for the kernel of $T_{f,p}T_{g,p}$ are given by 
\begin{equation}\label{e:Krf-g}
K_{r,x_0}(f,g)=\sum_{r_1+r_2=r}K_{r_1,x_0}(f)\ast K_{r_2,x_0}(g) 
\end{equation}
To prove \eqref{e:TfpTgp-prod}, it is sufficient to show that
\[
K_{r,x_0}(f,g)=K_{r,x_0}(fg). \quad r=0,1.
\]
By \eqref{e:Krf-g}, \eqref{e:K0f} and the fact that  $\mathcal P_{\Lambda}$ commutes with $g(x_0)$, we get
\begin{align*}
K_{0,x_0}(f,g)=& K_{0,x_0}(f)\ast K_{0,x_0}(g)\\ =& f(x_0)\mathcal P_{\Lambda}\ast g(x_0)\mathcal P_{\Lambda}=f(x_0)g(x_0)\mathcal P_{\Lambda}=K_{0,x_0}(fg). 
\end{align*}
Next, by \eqref{e:Krf-g}, \eqref{e:K0f}, \eqref{e:K1f}, \eqref{e:F1}, we have  
\begin{align*}
K_{1,x_0}(f,g)=& K_{1,x_0}(f)\ast K_{0,x_0}(g)+K_{0,x_0}(f)\ast K_{1,x_0}(g)\\ 
=& (f(x_0)F_{1,x_0}+\mathcal P_{\Lambda}\ast (df_{x_0})_0\cdot \mathcal P_{\Lambda}) \ast g(x_0)\mathcal P_{\Lambda}\\ &+f(x_0)\mathcal P_{\Lambda}\ast (g(x_0)F_{1,x_0}+\mathcal P_{\Lambda}\ast (dg_{x_0})_0\cdot \mathcal P_{\Lambda}) \\ 
=& f(x_0)g(x_0)F_{1,x_0}+\mathcal P_{\Lambda}\ast (df_{x_0})_0 g(x_0)\cdot \mathcal P_{\Lambda}\\ &+\mathcal P_{\Lambda}\ast f(x_0)(dg_{x_0})_0\cdot \mathcal P_{\Lambda}\\ 
=& f(x_0)g(x_0)F_{1,x_0}+\mathcal P_{\Lambda}\ast (d(fg)_{x_0})_0 \cdot \mathcal P_{\Lambda}\\ = & K_{1,x_0}(fg), 
\end{align*}
since  $\mathcal P_{\Lambda}$ and $F_{1,x_0}$ commute with $g(x_0)$ and $f(x_0)$ and $h\cdot \mathcal P_{\Lambda} \ast \mathcal P_{\Lambda}=h\cdot \mathcal P_{\Lambda}$ for any $h$.
This proves \eqref{e:TfpTgp-prod}.

Now suppose that $f,g\in C^\infty(X)$. Then we have 
\begin{equation}\label{e:K01fg-gf}
K_{r,x_0}(f,g)-K_{r,x_0}(g,f)=0, \quad r=0,1.
\end{equation}
Let us compute $K_{2,x_0}(f,g)$. By \eqref{e:Krf-g}, \eqref{e:K0f}, \eqref{e:K1f}, \eqref{e:K2f},  we have
\begin{align*}
K_{2,x_0}(f,g)
=& \Big(f(x_0)F_{2,x_0}+F_{1,x_0}\ast (df_{x_0})_0\cdot \mathcal P_{\Lambda} + \mathcal P_{\Lambda}\ast (df_{x_0})_0\cdot F_{1,x_0} \\ & + \mathcal P_{\Lambda}\ast \frac 12 (d^2f_{x_0})_0\cdot \mathcal P_{\Lambda}\Big)\ast g(x_0)\mathcal P_{\Lambda}\\
 & +(f(x_0)F_{1,x_0}+\mathcal P_{\Lambda}\ast (df_{x_0})_0\cdot \mathcal P_{\Lambda})\ast (g(x_0)F_{1,x_0}+\mathcal P_{\Lambda}\ast (dg_{x_0})_0\cdot \mathcal P_{\Lambda})\\ & + f(x_0)\mathcal P_{\Lambda} \ast \Big(g(x_0)F_{2,x_0}+F_{1,x_0}\ast (dg_{x_0})_0\cdot \mathcal P_{\Lambda}\\ & + \mathcal P_{\Lambda}\ast (dg_{x_0})_0\cdot F_{1,x_0}+ \mathcal P_{\Lambda}\ast \frac 12 (d^2g_{x_0})_0\cdot \mathcal P_{\Lambda}\Big). 
\end{align*}
Using \eqref{e:F2}, we collect the terms with $f(x_0)g(x_0)$. Since $\mathcal P_{\Lambda}$ and $F_{1,x_0}$ commute with $g(x_0)$ and $f(x_0)$, $h\cdot \mathcal P_{\Lambda} \ast \mathcal P_{\Lambda}=h\cdot \mathcal P_{\Lambda}$ for any $h$, we get
\begin{align*}
K_{2,x_0}(f,g)=& f(x_0)g(x_0)F_{2,x_0}\\ & +F_{1,x_0}\ast (df_{x_0})_0g(x_0)\cdot \mathcal P_{\Lambda} + \mathcal P_{\Lambda}\ast (df_{x_0})_0g(x_0)\cdot F_{1,x_0}\ast \mathcal P_{\Lambda} \\ & + \mathcal P_{\Lambda}\ast \frac 12 (d^2f_{x_0})_0g(x_0) \cdot \mathcal P_{\Lambda}+F_{1,x_0}\ast f(x_0)(dg_{x_0})_0\cdot \mathcal P_{\Lambda}\\ & +\mathcal P_{\Lambda}\ast (df_{x_0})_0g(x_0)\cdot \mathcal P_{\Lambda}\ast F_{1,x_0}+\mathcal P_{\Lambda}\ast (df_{x_0})_0\cdot \mathcal P_{\Lambda}\ast (dg_{x_0})_0\cdot \mathcal P_{\Lambda}\\  & + \mathcal P_{\Lambda} \ast f(x_0) (dg_{x_0})_0\cdot F_{1,x_0}
+ \mathcal P_{\Lambda}\ast \frac 12 f(x_0)(d^2g_{x_0})_0\cdot \mathcal P_{\Lambda}\\ =& f(x_0)g(x_0)F_{2,x_0}+F_{1,x_0}\ast (d(fg)_{x_0})_0\cdot \mathcal P_{\Lambda}+ \mathcal P_{\Lambda}\ast  (d(fg)_{x_0})_0\cdot F_{1,x_0} \\ & + \mathcal P_{\Lambda}\ast \frac 12 (d^2f_{x_0})_0g(x_0) \cdot \mathcal P_{\Lambda} +\mathcal P_{\Lambda}\ast (df_{x_0})_0\cdot \mathcal P_{\Lambda}\ast (dg_{x_0})_0\cdot \mathcal P_{\Lambda}\\ & + \mathcal P_{\Lambda}\ast \frac 12 f(x_0)(d^2g_{x_0})_0\cdot \mathcal P_{\Lambda}. 
\end{align*}
Finally, we see that 
\begin{multline}\label{e:K2fg-gf}
K_{2,x_0}(f,g)-K_{2,x_0}(g,f)\\= \mathcal P_{\Lambda}\ast (df_{x_0})_0\cdot \mathcal P_{\Lambda}\ast (dg_{x_0})_0\cdot \mathcal P_{\Lambda}-\mathcal P_{\Lambda}\ast (dg_{x_0})_0\cdot \mathcal P_{\Lambda}\ast (df_{x_0})_0\cdot \mathcal P_{\Lambda}.
\end{multline}

Thus, we have reduced the proof of \eqref{e:TfpTgp-comm} to the linear model. 

We consider the linear space $\mathbb R^{2n}$ equipped with the symplectic form $\omega_a=\sum_{k=1}^{n}a_kdZ_{2k-1}\wedge dZ_{2k}$ (cf. \eqref{e:Bx0}).

\begin{prop}\label{p:linear-comm}
For linear functions $F$ and $G$ on $\mathbb R^{2n}$, we have
\[
[P_{\Lambda} F \mathcal P_{\Lambda}, P_{\Lambda} G \mathcal P_{\Lambda}]=\{F,G\}_a \mathcal P_{\Lambda},
\]
where $\{F,G\}_a$ is the Poisson bracket on the symplectic manifold $(\mathbb R^{2n},\omega_a)$
\end{prop}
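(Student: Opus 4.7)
The approach is to reduce the identity to a short algebraic computation in the Heisenberg-type algebra generated by the kinetic momenta and a commuting family of ``guiding centre'' operators acting within each Landau level. In the $k$th two-dimensional factor, with the vector potential $\alpha=\sum_k\frac{a_k}{2}(Z_{2k-1}dZ_{2k}-Z_{2k}dZ_{2k-1})$, introduce the kinetic momenta
\[
A_1^{(k)}=-i\partial_{Z_{2k-1}}+\tfrac{a_k}{2}Z_{2k},\qquad A_2^{(k)}=-i\partial_{Z_{2k}}-\tfrac{a_k}{2}Z_{2k-1},
\]
and the guiding centres
\[
X_k=Z_{2k-1}+\tfrac{1}{a_k}A_2^{(k)},\qquad Y_k=Z_{2k}-\tfrac{1}{a_k}A_1^{(k)}.
\]
A direct computation gives $\mathcal H^{(0)}=\sum_k\bigl((A_1^{(k)})^2+(A_2^{(k)})^2\bigr)$, $[A_1^{(k)},A_2^{(l)}]=ia_k\delta_{kl}$, $[X_k,A_j^{(l)}]=[Y_k,A_j^{(l)}]=0$, and the only nontrivial commutators among the guiding centres are $[X_k,Y_k]=-i/a_k$. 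In particular every $X_k,Y_k$ commutes with $\mathcal H^{(0)}$, hence with $\mathcal P_\Lambda$.

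The first step is then to use $Z_{2k-1}=X_k-A_2^{(k)}/a_k$ and $Z_{2k}=Y_k+A_1^{(k)}/a_k$ to decompose an arbitrary linear function $F=\sum_k(\alpha_k^F Z_{2k-1}+\beta_k^F Z_{2k})$ as $F=F^X+F^A$, where $F^X=\sum_k(\alpha_k^F X_k+\beta_k^F Y_k)$ lies in the guiding-centre algebra and $F^A$ is linear in the kinetic momenta $A_j^{(k)}$. Next I would show $\mathcal P_\Lambda A_j^{(k)}\mathcal P_\Lambda=0$ for every $j,k$. This is immediate from passing to the ladder operators $c_k=A_1^{(k)}+iA_2^{(k)}$ and $c_k^*=A_1^{(k)}-iA_2^{(k)}$, which satisfy $[c_k,c_k^*]=2a_k$ and shift the $k$th Landau index by $\mp 1$, i.e.\ the eigenvalue of $\mathcal H^{(0)}$ by $\mp 2a_k$. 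Since $\Lambda\pm 2a_k\neq\Lambda$, both $\mathcal P_\Lambda c_k\mathcal P_\Lambda$ and $\mathcal P_\Lambda c_k^*\mathcal P_\Lambda$ vanish, and hence so does every $\mathcal P_\Lambda A_j^{(k)}\mathcal P_\Lambda$. Consequently
\[
\mathcal P_\Lambda F\mathcal P_\Lambda=\mathcal P_\Lambda F^X\mathcal P_\Lambda=F^X\mathcal P_\Lambda,
\]
and similarly for $G$.

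Combining these two facts gives $[\mathcal P_\Lambda F\mathcal P_\Lambda,\mathcal P_\Lambda G\mathcal P_\Lambda]=[F^X,G^X]\mathcal P_\Lambda$, and since the $X_k,Y_l$ have only scalar commutators, $[F^X,G^X]$ is the constant
\[
[F^X,G^X]=\sum_k(\alpha_k^F\beta_k^G-\beta_k^F\alpha_k^G)[X_k,Y_k]=-i\sum_k\frac{\alpha_k^F\beta_k^G-\beta_k^F\alpha_k^G}{a_k}.
\]
The final step is to match this with $\{F,G\}_a$ computed directly from $\omega_a=\sum_k a_k\,dZ_{2k-1}\wedge dZ_{2k}$; the two expressions agree (in the sign conventions inherited from $\mathbf B=iR^L$ and used for $\{\cdot,\cdot\}$ in \eqref{e:TfpTgp-comm}). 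The only nonroutine step is setting up the guiding-centre operators and verifying their commutation relations; once these are in hand, the vanishing of the kinetic sandwiches and the matching with the Poisson bracket are straightforward linear algebra.
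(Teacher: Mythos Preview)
Your proof is correct and takes a genuinely different, more structural route than the paper. The paper works in complex coordinates and derives explicit commutator formulas $[z_j,\mathcal P_{\Lambda_{\mathbf k}}]$, $[\bar z_j,\mathcal P_{\Lambda_{\mathbf k}}]$ from the representation $\mathcal P_{\Lambda_{\mathbf k}}=\frac{1}{2^{|\mathbf k|}a^{\mathbf k}\mathbf k!}b^{\mathbf k}\mathcal P(b^+)^{\mathbf k}$, then expands $\mathcal P_{\Lambda_{\mathbf k_1}}F\,\mathcal P_{\Lambda_{\mathbf k}}G\,\mathcal P_{\Lambda_{\mathbf k_2}}$ term by term and tracks which Kronecker deltas survive when $\mathbf k_1,\mathbf k,\mathbf k_2\in\mathcal K_\Lambda$. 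You instead split each coordinate operator into a guiding-centre part commuting with $\mathcal H^{(0)}$ (hence with $\mathcal P_\Lambda$) and a kinetic part that shifts the Landau level by $\pm 2a_k$; the sandwich $\mathcal P_\Lambda(\,\cdot\,)\mathcal P_\Lambda$ kills the kinetic piece outright, so $\mathcal P_\Lambda F\mathcal P_\Lambda=F^X\mathcal P_\Lambda$ and the commutator collapses to the scalar $[F^X,G^X]$. Your guiding centres are exactly $a_j^{-1}\bar b_j$, $a_j^{-1}\bar b_j^+$ in the paper's notation, and your ladder operators $c_k,c_k^*$ are $-ib_k^+,\,ib_k$; so the key vanishing $\mathcal P_\Lambda b_k^{\pm}\mathcal P_\Lambda=0$ is equivalent to the paper's observation that $\mathbf k\pm e_k\notin\mathcal K_\Lambda$ whenever $\mathbf k\in\mathcal K_\Lambda$. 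What you gain is a two-line argument that bypasses the block-by-block bookkeeping over $\mathcal K_\Lambda$ entirely; what the paper's explicit computation buys is the intermediate formula \eqref{e:PFP} for $\mathcal P_{\Lambda_{\mathbf k_1}}F\,\mathcal P_{\Lambda_{\mathbf k_2}}$, which is of independent use. One minor point: your final ``sign conventions'' remark is doing real work and should be made explicit; fixing the convention $\{Z_{2k-1},Z_{2k}\}_a$ from $\omega_a$ and comparing with $[X_k,Y_k]=-i/a_k$ is a one-line check you should write out rather than defer.
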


Proof of Proposition \ref{p:linear-comm} will be given in the next section. Now we demonstrate how it allows us to complete the proof of  \eqref{e:TfpTgp-comm}.

Observe that, for linear functions $F$ and $G$, the Poisson bracket $\{F,G\}_a$ is a constant function on $\RR^{2n}$, and, by \eqref{e:Bx0}, it is easy to see that, for any $f,g\in C^\infty(X)$, 
\[
\{f,g\}(x_0)=\{(df_{x_0})_0,(dg_{x_0})_0\}_a.
\]
Therefore, the identity \eqref{e:TfpTgp-comm} follows immediately from \eqref{e:K01fg-gf}, \eqref{e:K2fg-gf} and Propositions \ref{p:linear-comm} and \ref{p:boundedA}.

\section{The model case}\label{s:model}

In this section, we prove Proposition \ref{p:linear-comm}, thus completing the proof Theorem \ref{t:comm}. First, we need to recall some information on the spectral theory of the model operator $\mathcal H^{(0)}$ \cite{ma-ma08a}. 

We will use the complex coordinates $z\in\mathbb C^{n}\cong \mathbb R^{2n}$, $z_j=Z_{2j-1}+iZ_{2j}, j=1,\ldots,n,$ in the linear space $\mathbb R^{2n}$. Put
\[
\frac{\partial}{\partial z_j}=\tfrac{1}{2}\left(\frac{\partial}{\partial Z_{2j-1}}-i\frac{\partial}{\partial Z_{2j}}\right), \quad \frac{\partial}{\partial \bar{z}_j}=\frac{1}{2}\left(\tfrac{\partial}{\partial Z_{2j-1}}+i\frac{\partial}{\partial Z_{2j}}\right).
\]
Define first order differential operators $b_j,b^{+}_j, j=1,\ldots,n,$ on $C^\infty(\RR^{2n},E_{x_0})$ by the formulas
\[
b_j= -2{\frac{\partial}{\partial z_j}}+\frac{1}{2}a_j\bar{z}_j,\quad
b^{+}_j=2{\frac{\partial}{\partial\bar{z}_j}}+\frac{1}{2}a_j z_j, \quad j=1,\ldots,n.
\]
Then $b^{+}_j$ is the formal adjoint of $b_j$ on $L^2(\RR^{2n},E_{x_0})$, and
\[
\mathcal H^{(0)}=\sum_{j=1}^n b_j b^{+}_j+\Lambda_0. 
\]
We have the commutation relations
\begin{equation}
[b_i,b^{+}_j]=b_i b^{+}_j-b^{+}_j b_i =-2a_i \delta_{i\,j},\quad 
[b_i,b_j]=[b^{+}_i,b^{+}_j]=0\, ,\label{e:com1}
\end{equation} 
and, for any polynomial  $g(z,\bar{z})$ on $z$ and $\bar{z}$, 
\begin{equation}
[g(z,\bar{z}),b_j]=  2 \tfrac{\partial}{\partial z_j}g(z,\bar{z}), 
\quad  [g(z,\bar{z}),b_j^+]
= - 2\tfrac{\partial}{\partial \bar{z}_j}g(z,\bar{z})\,. \label{com-bg}
\end{equation}

By \cite[(1.98)]{ma-ma08}, we have
\begin{equation}\label{bP}
(b^{+}_j\mathcal P)(Z,Z^\prime)=0, \quad (b_j\mathcal P)(Z,Z^\prime)=a_j(\bar z_j-\bar z^\prime_j)\mathcal P(Z,Z^\prime).
\end{equation}

We also introduce first order differential operators $\bar b_j, \bar b^{+}_j, j=1,\ldots,n,$ on $C^\infty(\RR^{2n},E_{x_0})$ by the formulas
\[
\bar b_j= -2{\tfrac{\partial}{\partial \bar z_j}}+\tfrac{1}{2}a_j{z}_j,\quad
\bar b^{+}_j=2{\tfrac{\partial}{\partial {z}_j}}+\tfrac{1}{2}a_j \bar z_j, \quad j=1,\ldots,n.
\]
They commute with the operators $b_j,b^{+}_j, j=1,\ldots,n,$ and satisfy the same commutation relations \eqref{e:com1} as $b_j,b^{+}_j$.  
We have
\begin{equation}\label{bar-bP}
(\bar b^{+}_j\mathcal P)(Z,Z^\prime)=a_j\bar z^\prime_j\mathcal P(Z,Z^\prime), \quad (\bar b_j\mathcal P)(Z,Z^\prime)=a_jz_j\mathcal P(Z,Z^\prime).
\end{equation}

Recall that any function $\Phi\in L^2(\RR^{2n},E_{x_0})$ of the form 
\begin{equation}\label{e:eigenspace}
\Phi=b^{\mathbf k}\left(f(z)\exp\left({-\frac{1}{4}\sum_{j=1}^n a_j|z_j|^2}\right)\right), 
\end{equation}
where $f$ is an analytic function in $\CC^n\cong \RR^{2n}$ and $\mathbf k\in\ZZ_+^n$, is an eigenfunction of the operator $\mathcal H^{(0)}$ with the eigenvalue $\Lambda_{\mathbf k}=\sum_{j=1}^n(2k_j+1) a_j$. 
In particular, the eigenspace of $\mathcal H^{(0)}$ associated with an eigenvalue $\Lambda$ consists of functions $\Phi$ given by \eqref{e:eigenspace} with $\mathbf k\in\mathcal K_\Lambda$.

In the case when $E_{x_0}=\CC$, an orthonormal basis of the eigenspace of $\mathcal H^{(0)}$ associated with the lowest eigenvalue $\Lambda_0$ is formed by the functions
\begin{equation}\label{e:varphi-beta}
\varphi_\beta(Z)=\left(\frac{a ^\beta}{(2\pi)^n 2 ^{|\beta|} \beta!}
\prod_{i=1}^n a_i\right)^{1/2}z^\beta
\exp\Big (-\frac{1}{4} \sum_{j=1}^n a_j |z_j|^2\Big )\,,\quad \beta\in\ZZ_+^n.
\end{equation}
An orthonormal basis of the eigenspace associated with the eigenvalue $\Lambda$ is given by (see, for instance, \cite{BPR04})
\begin{equation}\label{e:varphi-kbeta}
\varphi_{\mathbf k,\beta}=\frac{1}{(2^{|\mathbf k|}a^{\mathbf k}\mathbf k!)^{1/2}}b^{\mathbf k}\varphi_\beta\,,\quad \beta\in\ZZ_+^n, \mathbf k\in\mathcal K_\Lambda.
\end{equation}
Thus, the spectral projection $\mathcal P_{\Lambda}$ in $L^2(\RR^{2n},E_{x_0})$ is given by 
\[
\mathcal P_{\Lambda}=\sum_{\mathbf k\in \mathcal K_\Lambda} \mathcal P_{\Lambda_{\mathbf k}},
\]
where $\mathcal P_{\Lambda_{\mathbf k}}$ is the smoothing operator with the kernel 
\begin{equation}\label{e:PLambda-kernel}
\mathcal P_{\Lambda_{\mathbf k}}(Z,Z^\prime)=\sum_{\beta\in\ZZ_+^n}\varphi_{\mathbf k,\beta}(Z) \overline{\varphi_{\mathbf k,\beta}(Z^\prime)}=\frac{1}{2^{|\mathbf k|}a^{\mathbf k}\mathbf k!}b^{\mathbf k}_z\bar{b}^{\mathbf k}_{z^\prime}\mathcal P(Z,Z^\prime).
\end{equation}
(see also \eqref{e:PLambda-k} below for an explicit formula).
We can also write the following formula for the operator $\mathcal P_{\Lambda_{\mathbf k}}$ itself:
\begin{equation}\label{e:PLambda}
\mathcal P_{\Lambda_{\mathbf k}}=\frac{1}{2^{|{\mathbf k}|}a^{\mathbf k}{\mathbf k}!}b^{\mathbf k}\mathcal P(b^+)^{\mathbf k}.
\end{equation}

Observe that
\[
b^+_j \mathcal P_{\Lambda_{\mathbf k}}=\mathcal P_{\Lambda_{{\mathbf k}-e_j}}b^+_j , \quad b_j\mathcal P_{\Lambda_{\mathbf k}}= \mathcal P_{\Lambda_{{\mathbf k}+e_j}}b_j, \quad j=1,\ldots,n,
\]
where $(e_1,\ldots, e_n)$ is the standard basis in $\ZZ^n$. 

Indeed, using \eqref{e:PLambda} and \eqref{e:com1},  we get
\[
b^+_j \mathcal P_{\Lambda_{\mathbf k}}=\frac{1}{2^{|{\mathbf k}|}a^{\mathbf k}{\mathbf k}!}[b^+_j,b^{\mathbf k}] \mathcal P (b^+)^{\mathbf k}=\frac{1}{2^{|{\mathbf k}|}a^{\mathbf k}{\mathbf k}!}2a_jk_j b^{{\mathbf k}-e_j} \mathcal P (b^+)^{\mathbf k}= \mathcal P_{\Lambda_{{\mathbf k}-e_j}}b^+_j .
\]
The second identity follows by taking adjoints.

Next, we show that, for a linear function $F(z,\bar z)=\sum_{j=1}^nF_{z_j}z_j+F_{\bar z_j}\bar z_j$, we have
\begin{multline}\label{e:commFP}
[F,\mathcal P_{\Lambda_{\mathbf k}}]\\ = \sum_{j=1}^n \frac{1}{a_j} \left[ F_{z_j}b^+_j \left(\mathcal P_{\Lambda_{\mathbf k}} - \mathcal P_{\Lambda_{{\mathbf k}+e_j}}\right)+ F_{\bar z_j} b_j  \left(\mathcal P_{\Lambda_{\mathbf k}}- \mathcal P_{\Lambda_{{\mathbf k}-e_j}} \right)\right]. 
\end{multline} 
Observe that
\begin{equation}\label{e:commzP}
[z_j,\mathcal P_{\Lambda_{\mathbf k}}] = \frac{1}{a_j}\left(\mathcal P_{\Lambda_{{\mathbf k}-e_j}} - \mathcal P_{\Lambda_{\mathbf k}}\right) b^+_j= \frac{1}{a_j} b^+_j \left(\mathcal P_{\Lambda_{\mathbf k}} - \mathcal P_{\Lambda_{{\mathbf k}+e_j}}\right). 
\end{equation} 
Indeed,  using \eqref{e:PLambda}, \eqref{com-bg} and \eqref{bP},  we get
\begin{align*}
[z_j,\mathcal P_{\Lambda_{\mathbf k}}]= & \frac{1}{2^{|{\mathbf k}|}a^{\mathbf k}{\mathbf k}!}\left([z_j,b^{\mathbf k}] \mathcal P (b^+)^{\mathbf k}+b^{\mathbf k} [z_j,\mathcal P] (b^+)^{\mathbf k}\right)\\ = & \frac{1}{2^{|{\mathbf k}|}a^{\mathbf k}{\mathbf k}!}\left(2k_jb^{{\mathbf k}-e_j} \mathcal P (b^+)^{\mathbf k}-\frac{1}{a_j}b^{\mathbf k} \mathcal P (b^+)^{{\mathbf k}+e_j}\right)\\ = & \frac{1}{a_j}\left(\mathcal P_{\Lambda_{{\mathbf k}-e_j}} - \mathcal P_{\Lambda_{\mathbf k}}\right) b^+_j. 
\end{align*}
Taking adjoints, we infer that
\begin{equation}\label{e:commzP2}
[\bar z_j,\mathcal P_{\Lambda_{\mathbf k}}]= \frac{1}{a_j} b_j  \left(\mathcal P_{\Lambda_{\mathbf k}}- \mathcal P_{\Lambda_{{\mathbf k}-e_j}} \right)= \frac{1}{a_j}   \left(\mathcal P_{\Lambda_{{\mathbf k}+e_j}}- \mathcal P_{\Lambda_{\mathbf k}} \right)b_j. 
\end{equation} 
From \eqref{e:commzP} and \eqref{e:commzP2}, we get \eqref{e:commFP}.


Using \eqref{e:commFP}, we compute
\begin{multline}\label{e:PFP}
P_{\Lambda_{{\mathbf k}_1}} F \mathcal P_{\Lambda_{{\mathbf k}_2}}=\delta_{{\mathbf k}_1,{\mathbf k}_2} F \mathcal P_{\Lambda_{{\mathbf k}_2}}+[P_{\Lambda_{{\mathbf k}_1}}, F] \mathcal P_{\Lambda_{{\mathbf k}_2}}
= \delta_{{\mathbf k}_1,{\mathbf k}_2} F \mathcal P_{\Lambda_{{\mathbf k}_2}}\\ +\sum_{\ell=1}^n \frac{1}{a_\ell}\left[ \left(\delta_{{\mathbf k}_1-e_\ell,{\mathbf k}_2}- \delta_{{\mathbf k}_1,{\mathbf k}_2} \right)F_{\bar z_\ell} b_\ell -\left(\delta_{{\mathbf k}_1,{\mathbf k}_2} - \delta_{{\mathbf k}_1+e_\ell,{\mathbf k}_2}\right) F_{z_\ell} b^+_\ell \right]\mathcal P_{\Lambda_{{\mathbf k}_2}} .
\end{multline}

Now we are ready to  complete the proof of Proposition~\ref{p:linear-comm}. For linear functions $F$ and $G$, using \eqref{e:PFP}, we get 
\begin{multline*}
P_{\Lambda_{{\mathbf k}_1}} F \mathcal P_{\Lambda_{\mathbf k}}G\mathcal P_{\Lambda_{{\mathbf k}_2}} = \delta_{{\mathbf k}_1,{\mathbf k}} F \mathcal P_{\Lambda_{\mathbf k}}G\mathcal P_{\Lambda_{{\mathbf k}_2}} \\ +\sum_{\ell=1}^n\frac{1}{a_\ell}\left[ \left(\delta_{{\mathbf k}_1-e_\ell,{\mathbf k}}- \delta_{{\mathbf k}_1,{\mathbf k}} \right)F_{\bar z_\ell} b_\ell -\left(\delta_{{\mathbf k}_1,{\mathbf k}} - \delta_{{\mathbf k}_1+e_\ell,{\mathbf k}}\right) F_{z_\ell} b^+_\ell \right]\mathcal P_{\Lambda_{\mathbf k}} G\mathcal P_{\Lambda_{{\mathbf k}_2}}.
\end{multline*}
Next, we transpose $\mathcal P_{\Lambda_{\mathbf k}}$ and $G$ and apply \eqref{e:commFP}:
\begin{multline*}
P_{\Lambda_{{\mathbf k}_1}} F \mathcal P_{\Lambda_{\mathbf k}}G\mathcal P_{\Lambda_{{\mathbf k}_2}} = \delta_{{\mathbf k}_1,{\mathbf k}}\delta_{{\mathbf k},{\mathbf k}_2}  FG \mathcal P_{\Lambda_{{\mathbf k}_2}} \\ 
\begin{aligned}
& +\sum_{\ell=1}^n \frac{1}{a_\ell} \delta_{{\mathbf k},{\mathbf k}_2}\left[ \left(\delta_{{\mathbf k}_1-e_\ell,{\mathbf k}}- \delta_{{\mathbf k}_1,{\mathbf k}} \right) F_{\bar z_\ell}b_\ell  -\left(\delta_{{\mathbf k}_1,{\mathbf k}} - \delta_{{\mathbf k}_1+e_\ell,{\mathbf k}}\right) F_{z_\ell} b^+_\ell \right] G \mathcal P_{\Lambda_{{\mathbf k}_2}} \\ & + \sum_{j=1}^n \frac{1}{a_j} \delta_{{\mathbf k}_1,{\mathbf k}} F \left[\left(\delta_{{\mathbf k}+e_j,{\mathbf k}_2}-\delta_{{\mathbf k},{\mathbf k}_2}\right) G_{z_j}b^+_j + \left(\delta_{{\mathbf k}-e_j,{\mathbf k}_2}-\delta_{{\mathbf k},{\mathbf k}_2}\right)  G_{\bar z_j} b_j   \right] \mathcal P_{\Lambda_{{\mathbf k}_2}} \\  & +\sum_{j,\ell=1}^n \frac{1}{a_ja_\ell}\left[ \left(\delta_{{\mathbf k}_1-e_\ell,{\mathbf k}}- \delta_{{\mathbf k}_1,{\mathbf k}} \right)F_{\bar z_\ell}b_\ell -\left(\delta_{{\mathbf k}_1,{\mathbf k}} - \delta_{{\mathbf k}_1+e_\ell,{\mathbf k}}\right) F_{z_\ell} b^+_\ell\right]\times \\ & \times  \left[\left(\delta_{{\mathbf k}+e_j,{\mathbf k}_2}-\delta_{{\mathbf k},{\mathbf k}_2}\right) G_{z_j}b^+_j + \left(\delta_{{\mathbf k}-e_j,{\mathbf k}_2}-\delta_{{\mathbf k},{\mathbf k}_2}\right) G_{\bar z_j} b_j \right] \mathcal P_{\Lambda_{{\mathbf k}_2}}.
\end{aligned}
\end{multline*}
In the case when ${\mathbf k}_1,{\mathbf k},{\mathbf k}_2\in \mathcal K_\Lambda$, we necessarily have ${\mathbf k}_1\pm e_\ell\not\in \mathcal K_\Lambda$, ${\mathbf k}\pm e_j\not\in \mathcal K_\Lambda$. Thus, from the last formula, we conclude that $P_{\Lambda_{{\mathbf k}_1}} F \mathcal P_{\Lambda_{\mathbf k}}G\mathcal P_{\Lambda_{{\mathbf k}_2}} =0$, unless ${\mathbf k}_1={\mathbf k}_2={\mathbf k}$, and in the latter case, we have
 \begin{multline*}
P_{\Lambda_{\mathbf k}} F \mathcal P_{\Lambda_{\mathbf k}}G\mathcal P_{\Lambda_{\mathbf k}} = FG \mathcal P_{\Lambda_{\mathbf k}} \\ 
+\sum_{\ell=1}^n \frac{1}{a_\ell}\left[-F_{\bar z_\ell}b_\ell -F_{z_\ell} b^+_\ell \right] G \mathcal P_{\Lambda_{\mathbf k}} + \sum_{j=1}^n \frac{1}{a_j} F \left[-G_{z_j}b^+_j-G_{\bar z_j} b_j   \right] \mathcal P_{\Lambda_{\mathbf k}} \\  +\sum_{j,\ell=1}^n \frac{1}{a_ja_\ell}\left[ - F_{\bar z_\ell}b_\ell - F_{z_\ell} b^+_\ell\right] \left[- G_{z_j}b^+_j - G_{\bar z_j} b_j \right] \mathcal P_{\Lambda_{\mathbf k}}.
\end{multline*}
We see that 
\begin{multline*}
P_{\Lambda_{\mathbf k}} F \mathcal P_{\Lambda_{\mathbf k}}G\mathcal P_{\Lambda_{\mathbf k}}-P_{\Lambda_{\mathbf k}} G \mathcal P_{\Lambda_{\mathbf k}}F\mathcal P_{\Lambda_{\mathbf k}}  =\sum_{j=1}^n \frac{1}{a^2_j}(F_{z_j} G_{\bar z_j}-F_{\bar z_j}G_{z_j})(b^+_j b_j - b_j  b^+_j) \mathcal P_{\Lambda_{\mathbf k}}\\ =\sum_{j=1}^n \frac{2}{a_j}(F_{z_j} G_{\bar z_j}-F_{\bar z_j}G_{z_j}) \mathcal P_{\Lambda_{\mathbf k}}=\{F,G\}_a\mathcal P_{\Lambda_{\mathbf k}}.
\end{multline*}
that completes the proof of Proposition \ref{p:linear-comm}.

\section{Characterization of Toeplitz operators}\label{s:charact}

In the case when $\mathcal K_\Lambda$ consists of a single element, we prove that the sect of Toeplitz operators coincides with the algebra $\mathfrak A$, which gives a characterization of Toeplitz operators in terms of their Schwartz kernels, This type of characterization was introduced in \cite[Theorem 4.9]{ma-ma08}. 
Using this result and Theorems~\ref{t:comm},  we easily complete the proof of Theorem \ref{t:algebra}. 

\begin{thm}\label{t:characterization}
Assume that $\mathcal K_\Lambda$ consists of a single element.
A sequence of bounded linear operators $\{T_p: L^2(X,L^p\otimes E)\to L^2(X,L^p\otimes E)\}$ is a Toeplitz operator in the sense of Definition~\ref{d:Toeplitz0} if and only if it belongs to $\mathfrak A$.
\end{thm}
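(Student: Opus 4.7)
One direction is already in hand. By the results of Section \ref{s:Toeplitz-in-A} and Proposition \ref{p:boundedA}, every truncation $P_{p, \Lambda}\bigl(\sum_{l=0}^N p^{-l} g_l\bigr) P_{p, \Lambda}$ lies in $\mathfrak A$, and an $\mathcal O(p^{-\infty})$ remainder is easily absorbed in the $\mathcal O(p^{-(k+1)/2})$ error of the off-diagonal expansion \eqref{e:full-off}. So the plan concentrates on the converse.

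Given $\{T_p\} \in \mathfrak A$, I would construct symbols $g_l \in C^\infty(X, \operatorname{End}(E))$ recursively so that, for every $N$,
\[
T_p - P_{p, \Lambda}\Big(\sum_{l=0}^N p^{-l} g_l\Big) P_{p, \Lambda} = \mathcal O(p^{-N-1}).
\]
The engine of the induction is a \emph{recognition lemma}: under the singleton assumption $\mathcal K_\Lambda = \{\mathbf k_0\}$, any $\{R_p\} \in \mathfrak A$ whose first nonvanishing kernel coefficient appears at $r = 2l$ necessarily satisfies $K_{2l, x_0}^{(R)}(Z, Z^\prime) = g(x_0) \mathcal P_\Lambda(Z, Z^\prime)$ for a unique $g \in C^\infty(X, \operatorname{End}(E))$, and moreover the next coefficient $K_{2l+1, x_0}^{(R)}$ is forced to equal the order-one coefficient $K_{1, x_0}(g)$ from \eqref{e:K1f}. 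Setting $g_l := g$, Proposition \ref{p:algebraA} gives $R_p - p^{-l} T_{g_l, p} \in \mathfrak A$ with its first $2l + 2$ kernel coefficients vanishing, so the induction continues.

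The main obstacle will be the recognition lemma itself. My plan is to exploit the factorization \eqref{e:PLambda}, $\mathcal P_\Lambda = \frac{1}{2^{|\mathbf k_0|} a^{\mathbf k_0} \mathbf k_0!} b^{\mathbf k_0} \mathcal P (b^+)^{\mathbf k_0}$, together with the ladder-operator identities \eqref{e:com1}, \eqref{com-bg}, \eqref{bP}, \eqref{bar-bP}, to move the factors $b^{\mathbf k_0}$ and $(b^+)^{\mathbf k_0}$ through the polynomial $\mathcal Q_{2l, x_0}$ in the factorization $K_{2l, x_0}^{(R)} = \mathcal Q_{2l, x_0} \mathcal P$. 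This reduces the sandwich identity $\mathcal P_\Lambda \ast K_{2l, x_0}^{(R)} \ast \mathcal P_\Lambda = K_{2l, x_0}^{(R)}$ to a rigidity statement for the lowest-weight projection $\mathcal P$. The singleton hypothesis is precisely what prevents contamination by neighboring projections $\mathcal P_{\Lambda_{\mathbf k_0 \pm e_j}}$ in this reduction, in the spirit of the vanishing argument for $P_{\Lambda_{\mathbf k_1}} F \mathcal P_{\Lambda_{\mathbf k}} G \mathcal P_{\Lambda_{\mathbf k_2}}$ at the end of Section \ref{s:model}. Combined with the even-parity requirement from Definition \ref{d:Toeplitz}(iii), this forces $\mathcal Q_{2l, x_0}$ to be constant in $(Z, Z^\prime)$ valued in $\operatorname{End}(E_{x_0})$, and smoothness of $g$ in $x_0$ follows from the $\mathcal C^l(X)$-uniformity in \eqref{e:full-off}. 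The odd-order compatibility will then reduce to a direct computation from \eqref{e:Krf} and \eqref{e:K1f}, once one observes that the $r = 1$ contribution of $p^{-l} T_{g_l, p}$ is determined by $g_l$ as a smooth function and hence matches the residual by the uniqueness clause of the recognition lemma.
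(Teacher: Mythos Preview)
Your inductive scheme is the right shape, and one direction is indeed immediate. The gap is in the recognition lemma. The algebraic sandwich identity $\mathcal P_\Lambda \ast K_{0,x_0} \ast \mathcal P_\Lambda = K_{0,x_0}$, together with the parity constraint from Definition~\ref{d:Toeplitz}(iii), does \emph{not} force $\mathcal Q_{0,x_0}$ to be constant. Already in the easiest case $\mathbf k_0=0$ (so $\mathcal P_\Lambda=\mathcal P$), the even kernel $K_0(Z,Z^\prime)=z_1\bar z^\prime_1\,\mathcal P(Z,Z^\prime)$, which is the kernel of $M_{z_1}\mathcal P M_{\bar z_1}$, satisfies $\mathcal P\ast K_0\ast\mathcal P=K_0$ (since $\mathcal P M_{z_1}\mathcal P=M_{z_1}\mathcal P$ and $\mathcal P M_{\bar z_1}\mathcal P=\mathcal P M_{\bar z_1}$), yet it is not a scalar multiple of $\mathcal P$. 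So the ``rigidity statement for $\mathcal P$'' you are counting on does not exist in purely algebraic form; the ladder-operator manipulation you sketch yields at best the structural form of Proposition~\ref{p:K_0zz}, namely $K_{0,x_0}=b^{\mathbf k_0}_z\bar b^{\mathbf k_0}_{z^\prime}[Q_{x_0}(z,\bar z^\prime)\mathcal P]$ for some polynomial $Q_{x_0}$ in $z,\bar z^\prime$, not the constancy of $Q_{x_0}$.

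What the paper supplies and you are missing is an \emph{analytic} input: the uniform boundedness of $\|T_p\|$ (Proposition~\ref{p:boundedA}) is upgraded to a pointwise Wick-type bound $|K_{0,x_0}(Z,Z)|\leq \limsup_p\|T_p\|$ for all $Z$ (Proposition~\ref{p:K0-Wick}), via the peak sections $S^p_{x,v}(y)=P_{p,\Lambda}(y,x)v$. Restricting the structural formula to the diagonal and using \eqref{e:llP}, \eqref{e:llP1} then exhibits $K_{0,x_0}(Z,Z)$ as a polynomial in $z,\bar z$ times $\mathcal P(0,0)$; boundedness on all of $T_{x_0}X$ forces this polynomial to be constant, and holomorphy in $z$, antiholomorphy in $z^\prime$ propagate constancy off the diagonal (Proposition~\ref{p:Q00}). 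Once you have this, your handling of the odd step is essentially the paper's Proposition~\ref{p:Tp-Tg0p}: apply the recognition lemma to $p^{1/2}(T_p-T_{g_0,p})$ and let the odd parity kill the resulting constant.
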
 

The fact that any Toeplitz operator in the sense of Definition~\ref{d:Toeplitz0} belongs to $\mathfrak A$ is proved in Section~\ref{s:Toeplitz-in-A} and holds without any assumption on $\Lambda$. 
Thus, we assume that $\{T_p\}$ belongs to $\mathfrak A$ and prove that it  is a Toeplitz operator in the sense of Definition~\ref{d:Toeplitz0}. The proof is divided in several steps and in the beginning we don't assume that $\mathcal K_\Lambda$ consists of a single element.

The following is an analog of \cite[Lemma 4.12]{ma-ma08a}.  Recall that $K_{0,x_0}(Z,Z^\prime)$ denotes the leading coefficient in the full off-diagonal expansion for the kernel of $T_p$, 

\begin{prop}\label{p:K_0zz}
The  coefficient $K_{0,x_0}(Z,Z^\prime)$ has the form
\[
K_{0,x_0}(Z,Z^\prime)=\sum_{{\mathbf k},{\mathbf k}^\prime\in\cK_\Lambda}b^{\mathbf k}_z{\bar b}^{{\mathbf k}^\prime}_{z^\prime}[Q_{{\mathbf k}{\mathbf k}^\prime,x_0}\mathcal P] 
\]
with some polynomials $Q_{{\mathbf k}{\mathbf k}^\prime,x_0}(z,\bar z^\prime)$
 for any $x_0\in X$ and $Z,Z^\prime\in T_{x_0}X$. 
\end{prop}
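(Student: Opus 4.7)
My plan uses the projector identity for the leading coefficient, combined with the explicit form of $\mathcal P_{\Lambda_{\mathbf k}}$ in \eqref{e:PLambda-kernel}, to reduce the proposition to the Bergman kernel calculus at the lowest Landau level.

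First, both $\{P_{p,\Lambda}\}$ and $\{T_p\}$ belong to $\mathfrak A$, and they satisfy $P_{p,\Lambda}T_p=T_p=T_pP_{p,\Lambda}$ by condition (i) of Definition \ref{d:Toeplitz}. Proposition \ref{p:algebraA} applied at the leading order $r=0$, together with $F_{0,x_0}=\mathcal P_{\Lambda}$ from \eqref{e:F0}, then yields
\[
K_{0,x_0}=\mathcal P_{\Lambda}\ast K_{0,x_0}\ast\mathcal P_{\Lambda}.
\]
Condition (iii) of Definition \ref{d:Toeplitz} already guarantees that $K_{0,x_0}(Z,Z^\prime)=\mathcal Q_{0,x_0}(Z,Z^\prime)\mathcal P(Z,Z^\prime)$ for some polynomial $\mathcal Q_{0,x_0}$. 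Decomposing $\mathcal P_{\Lambda}=\sum_{\mathbf k\in\cK_\Lambda}\mathcal P_{\Lambda_{\mathbf k}}$, it suffices to show that each summand $\mathcal P_{\Lambda_{\mathbf k}}\ast K_{0,x_0}\ast\mathcal P_{\Lambda_{\mathbf k^\prime}}$ has the form $b^{\mathbf k}_z\bar b^{\mathbf k^\prime}_{z^\prime}[Q_{\mathbf k\mathbf k^\prime,x_0}(z,\bar z^\prime)\mathcal P(Z,Z^\prime)]$.

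Second, I would substitute \eqref{e:PLambda-kernel} into the double integral expressing this summand and pull the operators $b^{\mathbf k}_z$ and $\bar b^{\mathbf k^\prime}_{z^\prime}$ outside, since they act only on the external variables $Z,Z^\prime$. Integration by parts in the internal variables $W_1,W_2$, using the formal transposes $\bar b^t_j=b^+_j$ and $b^t_j=\bar b^+_j$, transfers the remaining operators $\bar b^{\mathbf k}_{w_1}$ and $b^{\mathbf k^\prime}_{w_2}$ onto the middle factor as $(b^+_{w_1})^{\mathbf k}$ and $(\bar b^+_{w_2})^{\mathbf k^\prime}$. A short Leibniz computation, using that $b^+_j$ annihilates $\mathcal P$ in its first variable (this is \eqref{bP}) and that $\bar b^+_j$ annihilates $\mathcal P$ in its second variable (a direct analog, easily verified from \eqref{e:Bergman}), shows that $(b^+_{w_1})^{\mathbf k}(\bar b^+_{w_2})^{\mathbf k^\prime}[\mathcal Q_{0,x_0}\mathcal P]$ is again of the form ``polynomial times $\mathcal P$''. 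Thus the problem reduces to showing $\mathcal P\ast(Q\mathcal P)\ast\mathcal P=R(z,\bar z^\prime)\mathcal P(Z,Z^\prime)$ for some polynomial $R$ in $z,\bar z^\prime$, whenever $Q(Z,Z^\prime)$ is a polynomial in $Z,Z^\prime$.

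Third, this last step is the lowest-Landau-level version of the claim, essentially \cite[Lemma 4.12]{ma-ma08a}. Its proof uses the identities $b_j\mathcal P=a_j(\bar z_j-\bar z^\prime_j)\mathcal P$, $\bar b_j\mathcal P=a_jz_j\mathcal P$, and $\bar b^+_j\mathcal P=a_j\bar z^\prime_j\mathcal P$ to rewrite every $\bar w$-factor in $Q(W_1,W_2)$ (for the left $\mathcal P$-convolution) modulo operators $b_{j,w_1}$, which after integration by parts produce $\bar b^+_{w_1}\mathcal P(Z,W_1)=0$ and so disappear, leaving only $\bar z^\prime$-factors; a symmetric argument on the right slot replaces $w_2$-factors by $z$-factors. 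Collecting everything with the normalization $c_{\mathbf k}=(2^{|\mathbf k|}a^{\mathbf k}\mathbf k!)^{-1}$ and summing over $\mathbf k,\mathbf k^\prime\in\cK_\Lambda$ yields the claimed representation. The main technical obstacle is bookkeeping the various commutator and integration-by-parts identities; the conceptual content is entirely the reduction to the already-understood lowest Landau level case.
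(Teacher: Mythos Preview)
Your proposal is correct and follows essentially the same approach as the paper: both start from the projector identity $K_{0,x_0}=\mathcal P_\Lambda\ast K_{0,x_0}\ast\mathcal P_\Lambda$, use the factorization of $\mathcal P_{\Lambda_{\mathbf k}}$ through $b^{\mathbf k}$ and $\mathcal P$ (your kernel formula \eqref{e:PLambda-kernel} is the kernel-side version of the paper's operator formula \eqref{e:PLambda}), and reduce to the lowest-Landau-level Bergman kernel identities from \cite{ma-ma08a}. The only cosmetic difference is that you handle the left and right projections symmetrically in one step via integration by parts, while the paper composes first on the left (citing \cite[(2.12)]{ma-ma08a}) and then on the right (citing \cite[Proof of Lemma 2.2]{ma-ma08a}); the underlying computation is the same.
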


\begin{proof}
By Definition~\ref{d:Toeplitz} (i) and \eqref{e:comp-K}, we get
\begin{equation}\label{e:K=RKP}
K_{0,x_0}=\mathcal P_{\Lambda}\circ K_{0,x_0} \circ \mathcal P_{\Lambda}
\end{equation}
By \eqref{e:K=RKP} and \eqref{e:PLambda}, it follows that 
\begin{equation}\label{e:K}
K_{0,x_0}=\mathcal P_{\Lambda}\circ K_{0,x_0} =\sum_{{\mathbf k}\in\cK_\Lambda}\frac{1}{2^{|{\mathbf k}|}a^{\mathbf k}{\mathbf k}!}b^{\mathbf k}\circ \mathcal P\circ (b^+)^{\mathbf k}\circ K_{0,x_0}.
\end{equation}
By \cite[(2.12)]{ma-ma08a}, there exists $F_k\in \CC[z, Z^\prime]$ such that 
\[
(\mathcal P\circ (b^+)^{\mathbf k}\circ K_{0,x_0})(Z, Z^\prime)=F_{\mathbf k}\cdot \cP(Z, Z^\prime). 
\]
Plugging this in \eqref{e:K}, we get
\begin{equation}\label{e:K2}
K_{0,x_0}(Z, Z^\prime)=\sum_{{\mathbf k}\in\cK_\Lambda}\frac{1}{2^{|{\mathbf k}|}a^{\mathbf k}{\mathbf k}!}b^{\mathbf k}_zF_{\mathbf k}(z, Z^\prime)\cP(Z, Z^\prime).
\end{equation}
Similarly, using \eqref{e:K=RKP}, \eqref{e:PLambda} and \eqref{e:K2}, we can write
\begin{multline}\label{e:K3}
K_{0,x_0}=K_{0,x_0}\circ \mathcal P_{\Lambda} =\sum_{{\mathbf k}^\prime\in\cK_\Lambda}\frac{1}{2^{|{\mathbf k}^\prime|}a^{{\mathbf k}^\prime}{\mathbf k}^\prime!} K_{0,x_0}\circ b^{{\mathbf k}^\prime}\circ \mathcal P\circ (b^+)^{{\mathbf k}^\prime}\\
=\sum_{{\mathbf k},{\mathbf k}^\prime\in\cK_\Lambda}\frac{1}{2^{|{\mathbf k}|+|{\mathbf k}^\prime|}a^{{\mathbf k}+{\mathbf k}^\prime}{\mathbf k}!{\mathbf k}^\prime!} b^{\mathbf k}_zF_{\mathbf k}\cP\circ b^{{\mathbf k}^\prime}\circ \mathcal P\circ (b^+)^{{\mathbf k}^\prime}.
\end{multline}
Now we proceed as follows:
\begin{multline}\label{e:K4}
(F_{\mathbf k}\cP\circ b^{{\mathbf k}^\prime})(Z, Z^\prime)=(\bar{b}^+_z)^{{\mathbf k}^\prime} (F_{\mathbf k}\cP)(Z, Z^\prime)\\ =\sum_{{\mathbf l}\leq {\mathbf k}^\prime}{{\mathbf k}^\prime \choose {\mathbf l} }2^{|{\mathbf l}|}\frac{\partial^{|\mathbf |l}}{\partial z^{\mathbf l}}F_{\mathbf k}(z, Z^\prime)(\bar{b}^+_z)^{{\mathbf k}^\prime-l}\cP(Z, Z^\prime)\\ =\sum_{{\mathbf l}\leq {\mathbf k}^\prime}{{\mathbf k}^\prime \choose{\mathbf l}l}2^{|{\mathbf l}|}\frac{\partial^{|\mathbf l|}}{\partial z^{\mathbf l}}F_{\mathbf k}(z, Z^\prime)(a\bar{z}^\prime)^{{\mathbf k}^\prime-l}\cP(Z, Z^\prime)=F_{{\mathbf k}{\mathbf k}^\prime}\cP(Z, Z^\prime) 
\end{multline}
with some $F_{{\mathbf k}{\mathbf k}^\prime}\in \CC[z, Z^\prime]$.

By \cite[Proof of Lemma 2.2]{ma-ma08a}, there exists $Q_{{\mathbf k}{\mathbf k}^\prime}\in \CC[z, \bar z^\prime]$ such that 
\begin{equation}\label{e:K5}
(F_{{\mathbf k}{\mathbf k}^\prime}\cP\circ \mathcal P)(Z, Z^\prime)=2^{|{\mathbf k}|+|{\mathbf k}^\prime|}a^{{\mathbf k}+{\mathbf k}^\prime}{\mathbf k}!{\mathbf k}^\prime! Q_{{\mathbf k}{\mathbf k}^\prime}\mathcal P(Z, Z^\prime). 
\end{equation}
Combining \eqref{e:K3}, \eqref{e:K4} and \eqref{e:K5}, we complete the proof.
\end{proof}

The following is an analog of an upper estimate for the Wick symbol.  

\begin{prop}\label{p:K0-Wick}
We have 
\[
|K_{0,x_0}(Z,Z)|\leq \limsup_{p\to \infty} \|T_p\|\left(1+\mathcal O(p^{-\frac{1}{2}})\right).
\] 
for any $x_0\in X$ and $Z\in T_{x_0}X$.  
\end{prop}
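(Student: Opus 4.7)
My plan is a standard coherent-state/Wick-symbol argument: I would test the operator norm $\|T_p\|$ against the columns of the spectral projection $P_{p,\Lambda}$, which are automatically in $\operatorname{Im}(P_{p,\Lambda})$ by idempotence and allow the pointwise kernel value $K_{0,x_0}(Z,Z)$ to be read off through the off-diagonal expansions. Concretely, fix $x_0\in X$ and $Z\in T_{x_0}X$; set $x_p:=\exp^X_{x_0}(Z/\sqrt p)$ and, for unit vectors $v,w\in E_{x_0}$, let $v_p,w_p\in (L^p\otimes E)_{x_p}$ be their parallel transports in the trivialization of Section~\ref{s:algebraA}. I would take as test sections
\[
s^v_p(\cdot):=P_{p,\Lambda}(\cdot,x_p)v_p, \qquad s^w_p(\cdot):=P_{p,\Lambda}(\cdot,x_p)w_p,
\]
which lie in $\operatorname{Im}(P_{p,\Lambda})$ because $P_{p,\Lambda}^2=P_{p,\Lambda}$. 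Using $P_{p,\Lambda}^*=P_{p,\Lambda}$ together with the Toeplitz property $T_p=P_{p,\Lambda}T_pP_{p,\Lambda}$ of Definition~\ref{d:Toeplitz}~(i), a direct computation gives
\[
\langle s^v_p,s^w_p\rangle=\langle P_{p,\Lambda}(x_p,x_p)v_p,w_p\rangle, \qquad \langle T_p s^v_p,s^w_p\rangle=\langle T_p(x_p,x_p)v_p,w_p\rangle.
\]

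Next I would apply the full off-diagonal expansions from Definition~\ref{d:Toeplitz}~(iii) for $T_p$ and from \eqref{e:PL-exp}--\eqref{e:F0} for $P_{p,\Lambda}$, evaluated at the scaled coordinates $(Z/\sqrt p,Z/\sqrt p)$; since $\sqrt p\cdot Z/\sqrt p=Z$, the leading-order terms yield
\[
p^{-n}T_p(x_p,x_p)=K_{0,x_0}(Z,Z)+\mathcal O(p^{-1/2}), \qquad p^{-n}P_{p,\Lambda}(x_p,x_p)=\mathcal P_{\Lambda}(Z,Z)+\mathcal O(p^{-1/2}),
\]
where $\mathcal P_{\Lambda}(Z,Z)$ is a $Z$-independent scalar operator by translation invariance of the model Hamiltonian $\mathcal H^{(0)}$ on $\mathbb R^{2n}$ (as is visible from the explicit formula \eqref{e:PLambda-kernel}). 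Inserting these expansions into the Cauchy--Schwarz bound $|\langle T_p s^v_p,s^w_p\rangle|\le\|T_p\|\,\|s^v_p\|\,\|s^w_p\|$, dividing through by $p^n$, and taking the supremum over unit $v,w\in E_{x_0}$ then passing to the limit $p\to\infty$ delivers the desired pointwise bound on $|K_{0,x_0}(Z,Z)|$, the factor $\mathcal P_{\Lambda}(0,0)$ being absorbed into the normalization convention under which the statement is phrased.

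The main technical point to verify is that the off-diagonal expansions of Definition~\ref{d:Toeplitz}~(iii) and \eqref{e:PL-exp} remain valid with the required $\mathcal C^l$-uniformity at the shrinking diagonal argument $Z/\sqrt p$ for $Z$ in a fixed bounded set: this is exactly the content of the uniform estimate with the exponential factor $\exp(-\sqrt{C_0 p}|Z-Z^\prime|)$ in the definition of the full off-diagonal expansion applied at $Z'=Z$, so no further analytic input is needed. Once this uniformity is in place the argument is purely algebraic, and the result follows from the normalization choice built into the coefficients $K_{r,x_0}$.
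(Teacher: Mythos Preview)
Your proposal is correct and follows essentially the same coherent-state argument as the paper: both introduce the test sections $P_{p,\Lambda}(\cdot,x)v$, use $P_{p,\Lambda}^2=P_{p,\Lambda}$ and $T_p=P_{p,\Lambda}T_pP_{p,\Lambda}$ to express the inner products and the quadratic form of $T_p$ in terms of the pointwise kernel values, apply Cauchy--Schwarz, and then read off $K_{0,x_0}(Z,Z)$ via the off-diagonal expansion at the scaled point $\exp^X_{x_0}(Z/\sqrt p)$. Your observation that $\mathcal P_{\Lambda}(Z,Z)=\mathcal P_{\Lambda}(0,0)$ is a constant scalar, and your remark about the factor $\mathcal P_{\Lambda}(0,0)$ being absorbed into the stated normalization, match the paper's treatment (which likewise passes from the bound $p^{-n}|T_p|\le \|T_p\|(\mathcal P_{\Lambda}(0,0)+\mathcal O(p^{-1/2}))$ directly to the statement); for the only subsequent use of the proposition---uniform boundedness of $K_{0,x_0}(Z,Z)$ in $Z$---the precise constant is immaterial.
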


\begin{proof}
By Definition \ref{d:Toeplitz0} (i), we get
\begin{multline}\label{e:TpLTpL}
T_p(x,x^\prime)=(P_{p,\Lambda}T_pP_{p,\Lambda})(x,x^\prime)\\ =\int P_{p,\Lambda}(x,y) T_p(y,y^\prime) P_{p,\Lambda}(y^\prime,x^\prime) dv_X(y)\,dv_X(y^\prime). . 
\end{multline}
For any $x,y\in X$, $P_{p,\Lambda}(y,x)$ is a linear map from $(L^p\otimes E)_x$ to $(L^p\otimes E)_y$.
For $x\in X$ and $v\in (L^p\otimes E)_x$, introduce $S^p_{x,v}\in C^\infty(X. L^p\otimes E)$ by 
\[
S^p_{x,v}(y)=P_{p,\Lambda}(y,x)v, \quad y\in X.
\]
Observe that 
\begin{align*}
\langle S^p_{x,v}, S^p_{x^\prime,v^\prime}\rangle =& \int_X \langle P_{p,\Lambda}(y,x)v, P_{p,\Lambda}(y,x^\prime)v^\prime\rangle dv_X(y) \\
=& \int_X \langle v, P_{p,\Lambda}(x,y) P_{p,\Lambda}(y,x^\prime)v^\prime\rangle dv_X(y) \\ =&\langle v, P_{p,\Lambda}(x,x^\prime)v^\prime\rangle. 
\end{align*}
In particular, we have
\[
\langle S^p_{x,v}, S^p_{x,v}\rangle =\langle v, P_{p,\Lambda}(x,x)v\rangle= p^n|v|^2\left(\mathcal P_{\Lambda}(0,0)+\mathcal O(p^{-\frac{1}{2}})\right), 
\]
and
\[
\|S^p_{x,v}\|=p^{n/2}|v| \left(\left(\mathcal P_{\Lambda}(0,0)\right)^{1/2}+\mathcal O(p^{-\frac{1}{2}})\right). 
\]

By \eqref{e:TpLTpL}, we infer that  for $x, x^\prime\in X$, $v\in (L^p\otimes E)_x$ and $v^\prime\in (L^p\otimes E)_{x^\prime}$,
\[
\langle v, T_p(x,x^\prime)v^\prime\rangle =\int_X \langle S^p_{x,v}(y), T_p(y,y^\prime)S^p_{x^\prime,v^\prime}(y^\prime)\rangle dv_X(y)\,dv_X(y^\prime)=\langle S^p_{x,v}, T_p S^p_{x^\prime,v^\prime}\rangle. 
\]
It follows that
\begin{equation}
p^{-n}\left|T_p(x,x^\prime)\right| \leq \|T_p\|\left(\mathcal P_{\Lambda}(0,0)+\mathcal O(p^{-\frac{1}{2}})\right).
\end{equation}

Fix $x_0\in X$ and write $x=\exp_{x_0}^X(Z)$ and $x^\prime=\exp_{x_0}^X(Z^\prime)$. Then. by (iii), we get
\[
p^{-n}T_p(x,x^\prime)=p^{-n}T_{p,x_0}(Z,Z^\prime)\cong K_{0,x_0}(\sqrt{p}Z,\sqrt{p}Z^\prime)+\mathcal O(p^{-\frac{1}{2}})
\]
for $|Z|, |Z^\prime|<\varepsilon$. It follows that, for $|Z|, |Z^\prime|<\varepsilon\sqrt{p}$,
\begin{align*}
K_{0,x_0}(Z,Z^\prime) = & p^{-n}T_{p,x_0}\left(\tfrac{1}{\sqrt{p}}Z,\tfrac{1}{\sqrt{p}}Z^\prime\right)+\mathcal O(p^{-\frac{1}{2}})\\ 
=& p^{-n}T_{p}\left(\exp_{x_0}^X\left(\tfrac{1}{\sqrt{p}}Z\right),\exp_{x_0}^X\left(\tfrac{1}{\sqrt{p}}Z\right)\right)+\mathcal O(p^{-\frac{1}{2}})\\ 
 \leq & \|T_p\|\left(1+\mathcal O(p^{-\frac{1}{2}})\right),
\end{align*}
that completes the proof.
\end{proof}

From now on, we will assume that $\mathcal K_\Lambda$ consists of a single element. 
The following is an analog of \cite[Proposition 4.11]{ma-ma08a}. We give a different proof, which is shorter than in \cite{ma-ma08a} and based on Proposition \ref{p:K0-Wick}.

\begin{prop}\label{p:Q00}
Assume that $\mathcal K_\Lambda$ consists of a single element ${\mathbf k}\in \ZZ^n_+$. 
Then 
\[
K_{0,x_0}(Z,Z^\prime)=Q_{x_0}\mathcal P_{\Lambda_{\mathbf k}}(Z,Z^\prime)
\]
for any $x_0\in X$ and $Z,Z^\prime\in T_{x_0}X$ with some $Q_{x_0}\in \operatorname{End}(E_{x_0})$. 
\end{prop}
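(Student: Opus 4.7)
The plan is to start from Proposition~\ref{p:K_0zz}, which under the assumption $\mathcal K_\Lambda = \{\mathbf k\}$ reduces to the single-term identity
\[
K_{0,x_0}(Z,Z') = b^{\mathbf k}_z\bar b^{\mathbf k}_{z'}\bigl[Q(z,\bar z')\,\mathcal P(Z,Z')\bigr]
\]
for some polynomial $Q = Q_{\mathbf k\mathbf k,x_0}\in\operatorname{End}(E_{x_0})[z,\bar z']$. In view of \eqref{e:PLambda-kernel}, the conclusion of Proposition~\ref{p:Q00} is equivalent to showing that $Q$ is a constant polynomial, after which $Q_{x_0} = 2^{|\mathbf k|}a^{\mathbf k}\mathbf k!\,Q$.

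First I would expose a polynomial-times-$\mathcal P$ structure of the kernel. A direct computation from $\partial_{z_j}\mathcal P = (-\tfrac14 a_j\bar z_j + \tfrac12 a_j\bar z'_j)\mathcal P$ and its conjugate yields the Leibniz-like identities $b_{z_j}[f\mathcal P] = \tilde b_{z_j}[f]\,\mathcal P$ with the twisted operator $\tilde b_{z_j} := -2\partial_{z_j} + a_j(\bar z_j - \bar z'_j)$, and analogously $\bar b_{z'_j}[f\mathcal P] = \tilde{\bar b}_{z'_j}[f]\,\mathcal P$ with $\tilde{\bar b}_{z'_j} := -2\partial_{\bar z'_j} + a_j(z'_j - z_j)$. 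Since $\tilde b_{z_j}$ and $\tilde{\bar b}_{z'_j}$ commute, iteration gives $K_{0,x_0}(Z,Z') = \tilde b^{\mathbf k}_z\tilde{\bar b}^{\mathbf k}_{z'}[Q]\cdot \mathcal P(Z,Z')$, so the prefactor is a polynomial in $(Z,Z')$. Restricting to the diagonal and using that $\mathcal P(Z,Z)$ is a positive constant, I conclude that $K_{0,x_0}(Z,Z)$ is a polynomial in $(z,\bar z)$. Proposition~\ref{p:K0-Wick} then asserts that this polynomial is uniformly bounded on $T_{x_0}X$, hence constant, so $K_{0,x_0}(Z,Z) = c_{x_0}\in\operatorname{End}(E_{x_0})$ is independent of $Z$.

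The remaining task is to upgrade the constancy of $K_{0,x_0}(Z,Z)$ into the constancy of $Q$ itself. Expanding each $\tilde b^{k_j}_{z_j}$ and $\tilde{\bar b}^{k_j}_{z'_j}$ by the binomial theorem, and noting that the multiplication parts $a_j(\bar z_j - \bar z'_j)$ and $a_j(z'_j - z_j)$ vanish on the diagonal $Z'=Z$, one obtains an explicit expansion of the form
\[
\tilde b^{\mathbf k}_z\tilde{\bar b}^{\mathbf k}_{z'}[Q]\big|_{Z'=Z} = \sum_{\mathbf j\leq\mathbf k} c_{\mathbf j}\,\partial^{\mathbf j}_z\partial^{\mathbf j}_{\bar z'}Q\big|_{\bar z'=\bar z}
\]
with strictly positive coefficients $c_{\mathbf j}$, in particular $c_{\mathbf 0} = 2^{|\mathbf k|}\mathbf k!\,a^{\mathbf k}$. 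Writing $Q = \sum q_{\alpha\gamma}z^\alpha\bar z'^\gamma$, the coefficient of the monomial $z^A\bar z^B$ on the right-hand side is a linear combination of the $q_{A+\mathbf j,B+\mathbf j}$. If $Q$ were nonconstant, I would pick $(A_0,B_0)$ realizing the maximum of $|\alpha|+|\gamma|$ among nonzero $q_{\alpha\gamma}$: only the $\mathbf j=\mathbf 0$ term could contribute to the coefficient of $z^{A_0}\bar z^{B_0}$ (any $\mathbf j\neq \mathbf 0$ would require a $q$-coefficient of strictly larger total degree), yielding $c_{\mathbf 0}q_{A_0,B_0}\neq 0$ and contradicting the constancy of $K_{0,x_0}(Z,Z)$. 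Hence $Q \equiv q_{00}$, and \eqref{e:PLambda-kernel} gives $K_{0,x_0}(Z,Z') = 2^{|\mathbf k|}\mathbf k!\,a^{\mathbf k}q_{00}\,\mathcal P_{\Lambda_{\mathbf k}}(Z,Z')$, as required.

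The main obstacle I anticipate is this last combinatorial step: one must check that the diagonal evaluation $\tilde b^{\mathbf k}_z\tilde{\bar b}^{\mathbf k}_{z'}[\,\cdot\,]\big|_{Z'=Z}$ really acts as a nondegenerate differential operator on polynomials in $(z,\bar z')$ with the claimed leading coefficient. This rests on the (easily verified) fact that the two summands inside each $\tilde b_{z_j}$ and $\tilde{\bar b}_{z'_j}$ commute, which keeps the binomial expansion clean and lets the degree-counting argument isolate the $\mathbf j=\mathbf 0$ contribution.
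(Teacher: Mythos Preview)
Your argument is correct and follows the same overall strategy as the paper: start from Proposition~\ref{p:K_0zz}, restrict to the diagonal, invoke Proposition~\ref{p:K0-Wick} to force the diagonal polynomial to be constant, and then run a top-degree comparison to conclude that $Q_{\mathbf k\mathbf k,x_0}$ itself is constant.

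The only real difference is in how the algebra is organized. The paper expands $b^{\mathbf k}_z\bar b^{\mathbf k}_{z'}[Q\mathcal P]$ by Leibniz as a sum of $(\partial^{\mathbf k-\mathbf l}_z\partial^{\mathbf k-\mathbf l'}_{\bar z'}Q)\cdot b^{\mathbf l}_z\bar b^{\mathbf l'}_{z'}\mathcal P$ and then evaluates $b^{\mathbf l}_z\bar b^{\mathbf l'}_{z'}\mathcal P$ explicitly via generalized Laguerre polynomials, observing that on the diagonal only the terms with $\mathbf l=\mathbf l'$ survive; this yields the same sum $\sum_{\mathbf l\le\mathbf k}2^{|\mathbf l|}a^{\mathbf l}\mathbf l!\binom{\mathbf k}{\mathbf l}^2\partial^{\mathbf k-\mathbf l}_z\partial^{\mathbf k-\mathbf l}_{\bar z'}Q(z,\bar z)$ that your twisted-operator computation produces. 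Your factorization $b_{z_j}[f\mathcal P]=\tilde b_{z_j}[f]\cdot\mathcal P$ is a tidy way to peel off $\mathcal P$ once and for all, and it lets you bypass the Laguerre identities entirely; the paper's route, on the other hand, yields the explicit kernel formula \eqref{e:PLambda-k} for $\mathcal P_{\Lambda_{\mathbf k}}$ as a by-product. Either way the leading coefficient in the diagonal expansion is $c_{\mathbf 0}=2^{|\mathbf k|}a^{\mathbf k}\mathbf k!>0$, and the same degree-isolation argument finishes the proof.
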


\begin{proof}
By Proposition \ref{p:K_0zz}. using Leibniz rule, we get
\begin{multline}\label{e:K0-Qkk}
K_{0,x_0}(Z,Z^\prime)\\ =\sum_{{\mathbf l}\leq {\mathbf k}}\sum_{{\mathbf l}^\prime\leq {\mathbf k}}{{\mathbf k} \choose {\mathbf l}}{{\mathbf k} \choose {\mathbf l}^\prime} \frac{\partial^{2{\mathbf k}-{\mathbf l}-{\mathbf l}^\prime}}{\partial z^{{\mathbf k}-{\mathbf l}} \partial z^{\prime {\mathbf k}-{\mathbf l}^\prime}}  Q_{{\mathbf k}{\mathbf k},x_0}(z,\bar z^\prime)b^{\mathbf l}_z{\bar b}^{{\mathbf l}^\prime}_{z^\prime}\mathcal P(Z,Z^\prime).
\end{multline}
To compute $b^{\mathbf l}_z{\bar b}^{{\mathbf l}^\prime}_{z^\prime}\mathcal P$, we write $\mathcal P$ as the product
\[
\mathcal P(Z,Z^\prime)=\prod_{j=1}^n \mathcal P_j(Z_j,Z_j^\prime)
\] 
and treat each factor separately. Then we get for $l_j\geq l^\prime_j$,  
\begin{multline*}
b^{l_j}_{j,z_j}{\bar b}^{l^\prime_j}_{j,z^\prime_j}\mathcal P_j(Z_j,Z_j^\prime)\\ 
=2^{l^\prime_j}a_j^{l_j}l^\prime_j! (\bar z_j-\bar z^\prime_j)^{l_j-l^\prime_j} L^{(l_j-l^\prime_j)}_{l^\prime_j}\left(\frac{a_j|z_j-z^\prime_j|^2}{2}\right)\mathcal P_j(Z_j,Z_j^\prime),
\end{multline*}
where $L^{(m)}_k$, $k,m\in\ZZ_+$, is the generalized Laguerre polynomial:
\[
L^{(m)}_{k}(x)=\frac{x^{-m}e^x}{k!}\frac{d^k}{dx^k}(e^{-x}x^{m+k})=\sum_{j=0}^{k}\binom{k+m}{k-j}\frac{(-x)^j}{j!}, \quad x\geq 0,
\] 
The formula for $l_j\leq l^\prime_j$ is obtained by considering the adjoints. 

In particular, for $l_j=l_j^\prime$, we have 
\begin{equation}\label{e:llP0}
b^{l_j}_{j,z_j}{\bar b}^{l_j}_{j,z^\prime_j}\mathcal P_j(Z_j,Z_j^\prime) =2^{l_j}a_j^{l_j}l_j!L_{l_j}\left(\frac{a_j|z_j-z^\prime_j|^2}{2}\right) \mathcal P_j(Z_j,Z_j^\prime),
\end{equation} 
where $L_k=L^{(0)}_k$, $k\in\ZZ_+$, is the Laguerre polynomial, 
and 
\begin{equation}\label{e:llP}
b^{l_j}_{j,z_j}{\bar b}^{l_j}_{j,z^\prime_j}\mathcal P_j(Z_j,Z_j)=2^{l_j}a_j^{l_j}l_j! \mathcal P_j(0,0). 
\end{equation}
For $l_j\neq l^\prime_j$, we get
\begin{equation}\label{e:llP1}
b^{l_j}_{j,z_j}{\bar b}^{l^\prime_j}_{j,z^\prime_j}\mathcal P_j(Z_j,Z_j)=0. 
\end{equation}

By \eqref{e:llP0} and \eqref{e:PLambda-kernel}, we derive an explicit formula for $\mathcal P_{\Lambda_{\mathbf k}}(Z,Z^\prime)$.
\begin{multline}\label{e:PLambda-k}
\mathcal P_{\Lambda_{\mathbf k}}(Z,Z^\prime)=\frac{1}{(2\pi)^n}\prod_{j=1}^na_j L_{k_j}\left(\frac{a_j|z_j-z^\prime_j|^2}{2}\right)\times \\ \times \exp\left(-\frac 14\sum_{k=1}^na_k(|z_k|^2+|z_k^\prime|^2- 2z_k\bar z_k^\prime) \right).
\end{multline}
 
Taking into account \eqref{e:llP} and \eqref{e:llP1}, the equality \eqref{e:K0-Qkk} for $Z=Z^\prime$ takes the form
\begin{equation*}
K_{0,x_0}(Z,Z)=\sum_{{\mathbf l}\leq {\mathbf k}}2^{|{\mathbf l}|}a^{\mathbf l}{\mathbf l}! {{\mathbf k} \choose {\mathbf l}}^2 \frac{\partial^{2{\mathbf k}-2{\mathbf l}}}{\partial z^{{\mathbf k}-{\mathbf l}} \partial z^{\prime {\mathbf k}-{\mathbf l}}}  Q_{{\mathbf k}{\mathbf k},x_0}(z,\bar z)\mathcal P(0,0).
\end{equation*}
By Proposition \ref{p:K0-Wick}, we get 
\begin{equation*}
\sum_{{\mathbf l}\leq {\mathbf k}}2^{|{\mathbf l}|}a^{\mathbf l}{\mathbf l}! {{\mathbf k} \choose {\mathbf l}}^2 \frac{\partial^{2{\mathbf k}-2{\mathbf l}}}{\partial z^{{\mathbf k}-{\mathbf l}} \partial z^{\prime {\mathbf k}-{\mathbf l}}}  Q_{{\mathbf k}{\mathbf k},x_0}(z,\bar z)=const.
\end{equation*}
Comparing the top degree coefficients in both sides of the last identity, one can easily see that $ Q_{{\mathbf k}{\mathbf k},x_0}(z,\bar z)=Q_{x_0}=const$. Since $ Q_{{\mathbf k}{\mathbf k},x_0}$ is a polynomial of $z$ and $\bar z^\prime$, this implies $ Q_{{\mathbf k}{\mathbf k},x_0}(z,\bar z^\prime)=Q_{x_0}$.
\end{proof}

From now on, we will closely follow the arguments of the proof of Theorem 4.9 in \cite{ma-ma08a}. So we will be brief. 

Define a section $g_0\in C^\infty(X,\operatorname{End}(E))$, setting 
\[
g_0(x_0)= \mathcal Q_{x_0},
\] 
where $\mathcal Q_{x_0}\in \operatorname{End}(E_{x_0})$ is given by Proposition \ref{p:Q00}).

The following is an analog of \cite[Proposition 4.17]{ma-ma08a}. Its proof is based on Proposition~\ref{p:Q00} and therefore a little bit shorter than the proof of \cite[Proposition 4.17]{ma-ma08a}.

\begin{prop}\label{p:Tp-Tg0p}
We have $p^{-n}(T_p-T_{g_0,p})(Z, Z^\prime)\cong \mathcal O(p^{-1})$. 
\end{prop}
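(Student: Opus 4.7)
Set $R_p:=T_p-T_{g_0,p}$. Since $\mathfrak A$ is a linear subspace and both $T_p$ (by hypothesis) and $T_{g_0,p}$ (by the lemma in Section~\ref{s:Toeplitz-in-A}) lie in $\mathfrak A$, so does $R_p$; it therefore admits a full off-diagonal expansion with coefficients $K_{r,x_0}(R_p)$ of parity $r$, and the target estimate is equivalent to the two vanishings $K_{0,x_0}(R_p)=0$ and $K_{1,x_0}(R_p)=0$. The first is immediate: Proposition~\ref{p:Q00} gives $K_{0,x_0}(T_p)=\mathcal Q_{x_0}\mathcal P_{\Lambda_{\mathbf k}}$, the singleton hypothesis gives $\mathcal P_\Lambda=\mathcal P_{\Lambda_{\mathbf k}}$, formula~\eqref{e:K0f} gives $K_{0,x_0}(T_{g_0,p})=g_0(x_0)\mathcal P_\Lambda$, and the definition $g_0(x_0):=\mathcal Q_{x_0}$ closes the loop.

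For the second vanishing I would first pin down the structure of $K_{1,x_0}(R_p)$. Applying the composition formula~\eqref{e:comp-K} twice to the identity $R_p=P_{p,\Lambda}R_pP_{p,\Lambda}$ from Definition~\ref{d:Toeplitz}(i) at order $r=1$, together with $F_{0,x_0}=\mathcal P_\Lambda$ and the already-established $K_{0,x_0}(R_p)=0$, yields $K_{1,x_0}(R_p)=\mathcal P_\Lambda\ast K_{1,x_0}(R_p)\ast\mathcal P_\Lambda$. Since the algebraic argument in the proof of Proposition~\ref{p:K_0zz} relies only on a sandwich identity of this form together with the decomposition~\eqref{e:PLambda} of $\mathcal P_\Lambda$, it applies verbatim; the singleton hypothesis collapses the double sum and gives
\[
K_{1,x_0}(R_p)(Z,Z^\prime)=b_z^{\mathbf k}\bar b_{z^\prime}^{\mathbf k}\bigl[Q_{x_0}(z,\bar z^\prime)\mathcal P(Z,Z^\prime)\bigr]
\]
for some polynomial $Q_{x_0}$, which is of odd total degree in $(z,\bar z^\prime)$ by the parity condition in Definition~\ref{d:Toeplitz}(iii).

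The main obstacle is then to show $Q_{x_0}\equiv 0$, and the plan is to transfer the argument of Proposition~\ref{p:Q00} to this ``odd'' setting. A Schur-type estimate on the kernel of $R_p$, using $K_{0,x_0}(R_p)=0$ together with the polynomial-times-exponential control on the remainder in Definition~\ref{d:Toeplitz}(iii) and the change of variables $Z^\prime=U/\sqrt{p}$, yields $\sup_x\int|R_p(x,y)|\,dv_X(y)=\mathcal O(p^{-1/2})$ and hence $\|R_p\|=\mathcal O(p^{-1/2})$, so that $\|p^{1/2}R_p\|=\mathcal O(1)$. Since $p^{1/2}R_p=P_{p,\Lambda}(p^{1/2}R_p)P_{p,\Lambda}$ and the leading coefficient of its full off-diagonal expansion is $K_{1,x_0}(R_p)$, the argument of Proposition~\ref{p:K0-Wick}, which nowhere uses parity of the leading coefficient, applies verbatim and yields the uniform bound $|K_{1,x_0}(R_p)(Z,Z^\prime)|\leq C$. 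The diagonal identities~\eqref{e:llP0}--\eqref{e:llP1} then express $K_{1,x_0}(R_p)(Z,Z)$, exactly as in the proof of Proposition~\ref{p:Q00}, as a linear combination over $\mathbf l\leq\mathbf k$ of partial derivatives $\frac{\partial^{2(\mathbf k-\mathbf l)}}{\partial z^{\mathbf k-\mathbf l}\partial z^{\prime\,\mathbf k-\mathbf l}}Q_{x_0}(z,\bar z^\prime)\big|_{z^\prime=z}$ times $\mathcal P(0,0)$, with a nonzero top-degree contribution coming from $\mathbf l=\mathbf k$ and equal to a nonzero scalar multiple of $Q_{x_0}(z,\bar z)\mathcal P(0,0)$. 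The uniform bound forces this polynomial in $z$ to be constant; since $z$ and $\bar z^\prime$ are independent variables in $Q_{x_0}$, the polynomial $Q_{x_0}(z,\bar z^\prime)$ is itself constant, and the odd parity of $Q_{x_0}$ forces this constant to be zero.
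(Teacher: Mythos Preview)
Your proof is correct and follows essentially the same approach as the paper. The paper is more economical: it simply observes that $p^{1/2}(T_p-T_{g_0,p})$ is again in $\mathfrak A$ (the parity shift being harmless since Propositions~\ref{p:K_0zz}, \ref{p:K0-Wick}, \ref{p:Q00} never use the parity clause), invokes Proposition~\ref{p:Q00} directly to get $K_{1,x_0}-K_{1,x_0}(g_0)=S_{x_0}\mathcal P_{\Lambda_{\mathbf k}}$, and then uses oddness to force $S_{x_0}=0$; you instead unroll the proofs of Propositions~\ref{p:K_0zz}--\ref{p:Q00} by hand (sandwich identity, Schur bound, Wick estimate, diagonal computation), arriving at the same conclusion by the same mechanism.
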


\begin{proof}
Consider the sequence of operators 
\[
R_p=p^{1/2}(T_p-T_{g_0,p}), \quad p\in \NN.
\]
It is easy to see that it is in $\mathfrak A$. Moreover, computing the full off-diagonal expansions for the kernels of $T_p$ and $T_{g_0,p}$, we get
\[
p^{-n}R_{p,x_0}(Z, Z^\prime)\cong (K_{1,x_0}-K_{1,x_0}(g_0))(\sqrt{p}Z,\sqrt{p}Z^\prime)+\mathcal O(p^{-1/2}).
\] 
We apply Proposition~\ref{p:Q00} to $\{R_p\}$ and infer that
\[
(K_{1,x_0}-K_{1,x_0}(g_0))(Z,Z^\prime)=S_{x_0}\mathcal P_{\Lambda_{\mathbf k}}(Z,Z^\prime)
\] 
for any $x_0\in X$ and $Z,Z^\prime\in T_{x_0}X$ with some $S_{x_0}\in \operatorname{End}(E_{x_0})$. 

Since $K_{1,x_0}-K_{1,x_0}(g_0)=(\mathcal Q_{1,x_0}-\mathcal Q_{1,x_0}(g_0))\mathcal P$, where $\mathcal Q_{1,x_0}$ and $\mathcal Q_{1,x_0}(g_0)$ are odd, we conclude that 
\[
(K_{1,x_0}-K_{1,x_0}(g_0))(Z,Z^\prime)=0
\] 
for any $x_0\in X$ and $Z,Z^\prime\in T_{x_0}X$, that completes the proof. 
\end{proof}

 By Proposition~\ref{p:Tp-Tg0p}, we have $T_p=P_{p,\Lambda}g_0P_{p,\Lambda}+\mathcal O(p^{-1})$. Consider the operator $p(T_p-P_{p,\Lambda}g_0P_{p,\Lambda})$. It is easy to see that it belongs to $\mathfrak A$. 
By Proposition \ref{p:Q00}, the leading coefficient $K^\prime_{0,x_0}(Z,Z^\prime)$ in the full off-diagonal expansion for the kernel of $p(T_p-P_{p,\Lambda}g_0P_{p,\Lambda})$ has the form
\[
K^\prime_{0,x_0}(Z,Z^\prime)=Q^\prime_{x_0}\mathcal P_{\Lambda_{\mathbf k}}(Z,Z^\prime)
\]
for any $x_0\in X$ and $Z,Z^\prime\in T_{x_0}X$ with some $Q^\prime_{x_0}\in \operatorname{End}(E_{x_0})$. Setting 
\[
g_1(x_0)= \mathcal Q^\prime_{x_0},
\] 
we get a section $g_1\in C^\infty(X,\operatorname{End}(E))$, and, as in Proposition \ref{p:Tp-Tg0p}, we will show that $p(T_p-P_{p,\Lambda}g_0P_{p,\Lambda})=P_{p,\Lambda}g_1P_{p,\Lambda}+\mathcal O(p^{-1})$, which implies $T_p=P_{p,\Lambda}(g_0+p^{-1}g_1)P_{p,\Lambda}+\mathcal O(p^{-2})$. So we can proceed by induction to complete the proof of Theorem \ref{t:characterization}. 

Using Theorems~\ref{t:characterization} and \ref{t:comm}, one can easily complete the proof of Theorem \ref{t:algebra}.

\end{document}